\documentclass{article}
\PassOptionsToPackage{numbers, compress}{natbib}
\usepackage[preprint]{neurips_2025}

\usepackage[utf8]{inputenc} 
\usepackage[T1]{fontenc}    
\usepackage{url}            
\usepackage{amssymb,amsfonts,amsmath,amsthm} 
\usepackage{enumerate,enumitem,graphicx,mathrsfs,eucal,verbatim, bbm, derivative}
\usepackage{tikz, tikz-cd}
\usepackage{svg}
\usepackage{wasysym}
\usepackage{rotating}
\usepackage{hyperref}     
\usepackage{subcaption}
\usepackage{booktabs}
\usepackage{stmaryrd}
\usepackage[normalem]{ulem}

\theoremstyle{plain}
\newtheorem{thm}{Theorem}[section]
\newtheorem{conj}[thm]{Conjecture}

\newtheorem{prop}[thm]{Proposition}
\newtheorem{lemm}[thm]{Lemma}
\newtheorem{cor}[thm]{Corollary}
\theoremstyle{definition}
\newtheorem{exmp}[thm]{Example}
\newtheorem{defn}{Definition}
\theoremstyle{remark}
\newtheorem{rmk}{Remark}[section]

\DeclareMathOperator{\rank}{rank}

\DeclareMathOperator{\ord}{ord}

\renewcommand{\AA}{\mathbb{A}}
\newcommand{\RR}{\mathbb{R}}

\newcommand{\KK}{\mathbb{K}}
\newcommand{\PP}{\mathbb{P}}
\newcommand{\ZZ}{\mathbb{Z}}

\newcommand{\Pcal}{\mathcal{P}}
\newcommand{\Ical}{\mathcal{I}}

\let\svthefootnote\thefootnote
\newcommand\freefootnote[1]{%
  \let\thefootnote\relax%
  \footnotetext{\hspace{-1.5em}#1}%
  \let\thefootnote\svthefootnote%
}

\title{Linear Independence of Polynomial Compositions \\ and Identifiability of Deep Neural Networks}

\author{
 Kathl\'en Kohn \\
 KTH Stockholm \\
 Digital Futures \\
 \texttt{kathlen@kth.se}
 \And
 Giovanni Luca Marchetti \\
 AI4S AB Stockholm \\
 \texttt{giovanni.marchetti@aiforscience.se} 
 \AND
 Alex Massarenti \\
 University of Ferrara \\
 \texttt{msslxa@unife.it} 
 \And
 Massimiliano Mella \\
 University of Ferrara \\
 \texttt{mll@unife.it}
}
\begin{document}

\maketitle

\begin{abstract}
Motivated by theoretical problems in deep learning, we conjecture that post-composing a fixed number of pairwise distinct nonconstant polynomials with a generic polynomial of sufficiently large degree yields linearly independent polynomials. This generalizes Newman--Slater's theorem on powers of polynomials. We establish several cases of this conjecture and its origin-passing variant: We prove the result for two polynomials, and for an arbitrary number of polynomials when their degrees are bounded. 
Furthermore, we show how the conjecture implies a complete understanding of the identifiability (i.e., parameter symmetries) of deep fully connected neural network architectures with generic polynomial activation functions. In particular, for network architectures with layer-specific activations of increasing degree, our established versions of the conjecture fully characterize the set of parameters yielding the same end-to-end network function. As a special case, we fully resolve the identifiability of shallow polynomial networks. 
\end{abstract}


\section{Introduction}\label{sec:intro}

One of the most fundamental questions for parametrized machine learning models is \emph{identifiability}: 
Given a function parametrized by the model, which parameter choices yield that function?
We investigate this question for the most classical deep learning architecture: \emph{multilayer perceptrons (MLPs)}. 
After fixing a so-called \emph{activation} $\sigma \colon \RR \to \RR$, such a model is parametrized by tuples of affine linear maps (of compatible dimensions): each tuple $(\alpha_1, \ldots, \alpha_L)$ yields a function by alternating the affine linear maps with the activation as $\alpha_L \circ \sigma \circ \cdots \circ \sigma \circ \alpha_2 \circ \sigma \circ \alpha_1$, where $\sigma$ is applied entrywise to vectors. 
We focus on MLPs without bias vectors, which means that each $\alpha_i$ is a linear map.

The identifiability question for MLPs has been answered for various choices of activations, e.g., for sigmoidal activations \cite{fefferman1994reconstructing,vlavcic2022neural} or hyperbolic tangent \cite{sussmann1992uniqueness,vlavcic2021affine}. For ReLU activations, identifiability is considered to be a challenging problem with only partial results available \cite{petzka2020notes,phuong2020functional,grigsby2023hidden,grillo2026most,gegenfurtner2026symmetries}.

The recently established subfield of deep learning theory deemed \emph{neuroalgebraic geometry} \cite{marchetti2025invitationneuroalgebraicgeometry} proposes to investigate \emph{polynomial} machine learning models. 
This is motivated by the fact that, on the one hand, the study of polynomial models is tractable via the rich toolbox of algebraic geometry and, on the other hand, the approximation capabilities of polynomials suggest that polynomial models can reflect the generally expected behavior of machine learning models.
For MLPs with polynomial activation $\sigma$, the identifiability question is (essentially) resolved when $\sigma$ is either linear (see, e.g., \cite{trager2019pure}) or a monomial of large degree \cite{finkel2024activation,cruaciun2025linear,usevich2025identifiability,massarenti2025alexander}.
For the agenda of neuroalgebraic geometry to use polynomial models as approximators of arbitrary models, it would be most interesting to establish identifiability results when $\sigma$ is a generic polynomial of large degree. 
However, the only known result in this setting is that, for a generic function parametrized by the model, there are finitely many parameter choices \cite[Theorem 4.1]{shahverdi2026learning}. 
Resolving the identifiability question would entail determining a precise condition for the term ``generic'' in the previous statement and identifying exactly which parameter choices occur in those finite fibers.
We address this problem in this article.

We wish to point out that the identifiability of other (i.e., non-MLP) polynomial machine learning models is known, namely for convolutional neural networks with a monomial activation of large degree \cite{shahverdi2024geometryoptimizationpolynomialconvolutional} or a generic polynomial activation \cite{shahverdi2026learning} or for single-head attention networks \cite{henry2024geometrylightningselfattentionidentifiability}.

For MLPs with a generic polynomial activation of large degree, the identifiability question leads us to an open problem in commutative algebra.
To motivate that problem, we recall the following known result due to Newman and Slater, which holds over any field $\KK$ of characteristic zero:
\begin{thm}[\cite{newman1979waring}]
\label{thm:newman-slater}
    Let $p_1, \ldots, p_k$, $k \geq 2$, be multivariate polynomials that are nonzero and pairwise nonproportional. If $r > k(k-2)$, then $p_1^r, \ldots, p_k^r$ are linearly independent. 
\end{thm}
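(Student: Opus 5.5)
We would prove Theorem~\ref{thm:newman-slater} by a Wronskian degree count, in the spirit of the Mason--Stothers $abc$-theorem.

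\textbf{Reductions.} Suppose some nontrivial relation $\sum_i c_i p_i^r = 0$ exists. Passing to a minimal one, we may assume that all $c_i \neq 0$ and that any $k-1$ of the $p_i^r$ are linearly independent. Minimality can shrink $k$ to some $k' \le k$, but $k(k-2)$ is increasing in $k$, so $r > k'(k'-2)$ still holds.

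Next we reduce to binary forms. We homogenize all $p_i$ to a common degree $d$ and restrict to a generic plane through the origin. This keeps the $p_i$ pairwise nonproportional, keeps the relation, and keeps the minimality. We may now work over the algebraic closure of $\KK$, so we have binary forms of degree $d$ on $\PP^1$.

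Let $g$ be the gcd of all the $p_i$. Dividing each $p_i$ by $g$ divides each $p_i^r$ by $g^r$, so we may assume that at every point $x \in \PP^1$ at least one $p_i$ is nonvanishing. After a projective change of coordinates we also assume that $\infty$ is not a zero of any $p_i$. We then dehomogenize to univariate polynomials $f_i = c_i p_i^r$ of degree exactly $rd$.

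\textbf{Wronskian and its degree.} The $f_i$ span a space $V$ of dimension $k-1$, and $\sum_i f_i = 0$. Let $W$ be the Wronskian of any basis of $V$; it is nonzero because the $f_i$ are linearly independent polynomials in characteristic zero. Up to a nonzero scalar, $W$ does not depend on the basis. Its degree satisfies
\[
\deg W \le (k-1)rd - \tfrac{(k-1)(k-2)}{2}.
\]
A sharper count on $\PP^1$, including the order of $W$ at $\infty$, gives the total Wronskian divisor degree $(k-1)(rd-(k-2))$.

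\textbf{Local lower bound.} At each point $x$, we choose a basis of $V$ adapted to $x$, with distinct vanishing orders $a_1 < \dots < a_{k-1}$. Then
\[
\ord_x W = \sum_j \bigl(a_j - (j-1)\bigr).
\]
Each value $r\,m_i(x)$, where $m_i(x) = \ord_x p_i$, occurs among the $a_j$. Some $m_i(x)$ equals $0$, so discarding that index gives
\[
\ord_x W \ge \sum_{i \neq i_0} r\,m_i(x) - \binom{k-1}{2} \quad\text{whenever the values } r\,m_i(x),\ i \neq i_0, \text{ are distinct.}
\]
In general we lose only a bounded amount of order per point.

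\textbf{Global comparison.} We sum over the at most $kd$ points where some $p_i$ vanishes. The left side is $\deg W \le (k-1)(rd-k+2)$. The right side is roughly $r\bigl(kd - \sum_x \max_i m_i(x)\bigr)$ minus $(k-2)$ times the number of relevant points. Comparing the two should force $r \le k(k-2)$, which contradicts the hypothesis.

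\textbf{Main obstacle.} The hard part is this local estimate when several $p_i$ share roots with equal multiplicities at $x$. In that case the orders $r\,m_i(x)$ coincide, and they contribute fewer distinct $a_j$ than there are indices. We would handle this with a finer count. Whenever several $f_i$ have the same order at $x$, a suitable linear combination of them has strictly higher order, which yields new $a_j$. These extra orders must still be large because the combination still lies in the ideal generated by $(t-x)^{r\min_i m_i(x)}$. Bookkeeping these contributions carefully, charging at most $k-2$ per point and per index, is what should produce exactly the threshold $k(k-2)$.
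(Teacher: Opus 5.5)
The paper only quotes this theorem from Newman--Slater, so there is no in-paper proof to compare with. Your strategy is the standard Wronskian route. Your reductions (up to a minor point below) and the count $(k-1)(rd-k+2)$ for the total order of $W$ are fine. However, there is a genuine gap at the decisive step.

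\emph{The local estimate.} The only local estimate you establish requires the orders $r\,m_i(x)$, $i\ne i_0$, to be distinct. Coincident orders are the typical case, not a degenerate one. At a root of a single $p_i$, the other $k-2$ retained indices all have order $0$. So for $k\ge 4$ and pairwise coprime $p_i$, your estimate applies at no relevant point.

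\emph{The global comparison.} The comparison you write down also cannot work. For pairwise coprime $p_i$ with simple roots, $\sum_x\max_i m_i(x)=kd$, so your right-hand side $r\bigl(kd-\sum_x\max_i m_i(x)\bigr)-(k-2)\cdot\#\{x\}$ is negative. Nor is ``a bounded loss per point'' enough. The adapted-basis formula can lose $\binom{k-1}{2}$ at a point, and over up to $kd$ points this only gives $r<k\binom{k-1}{2}$, which is cubic in $k$.

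\emph{The missing estimate} is $\ord_x W\ge (r-k+2)\sum_{i=1}^k m_i(x)$ at every $x$, that is, a loss of at most $(k-2)m_i(x)$ per index $i$ vanishing at $x$. It has a short proof in which coincidences play no role. Because $\sum_i f_i=0$ and any $k-1$ of the $f_i$ form a basis of $V$, the Wronskian of $\{f_i\}_{i\in J}$ equals $W$ up to a nonzero scalar for every $(k-1)$-subset $J$. Each entry $f_i^{(l)}$, $l\le k-2$, in the column of $f_i$ is divisible by $p_i^{r-l}$, so $\prod_{i\in J}p_i^{r-k+2}$ divides $W$. At a given $x$, take $J$ to omit an $i_0$ with $p_{i_0}(x)\neq 0$.

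The same bound can be phrased in your adapted-basis language. Let $V_{\ge b}$ be the subspace of elements of $V$ of order at least $b$ at $x$. Since $\{f_i\}_{i\ne i_0}$ is a basis, $\dim V_{\ge b}\ge\#\{i\ne i_0: r\,m_i(x)\ge b\}$. So, with $m_{(j)}$ the $j$-th smallest of the $m_i(x)$ for $i\ne i_0$, the sorted orders satisfy $a_j\ge\max(r\,m_{(j)},\,j-1)$. Hence $\ord_x W\ge\sum_{i:\,m_i(x)>0}\bigl(r\,m_i(x)-(k-2)\bigr)$. Your sketch with linear combinations of equal-order $f_i$ points in this direction but is not carried out. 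Also, the ideal generated by $(t-x)^{r\min_i m_i(x)}$ is the unit ideal once the gcd has been removed, so it gives nothing.

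Summing over $x$ gives $(r-k+2)kd\le(k-1)(rd-k+2)$, i.e.\ $rd\le k(k-2)d-(k-1)(k-2)$. This contradicts $r>k(k-2)$; for $k=2$ it reads $rd\le 0$.

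A minor point: pass to a minimal relation after restricting to a generic line, not before. You have not justified that minimality survives restriction. The relation and pairwise nonproportionality do survive, and $k(k-2)$ is increasing in $k$, so reordering these steps costs nothing.
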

For $k=3$, that result can be viewed as a polynomial version of Fermat's Last Theorem. 
The Newman--Slater result was the main ingredient in resolving identifiability for monomial MLPs \cite{cruaciun2025linear,finkel2024activation}.
Intuitively, it says that plugging polynomials into a monomial activation of large degree separates the polynomials enough so that they become linearly independent. 
We conjecture that not only monomial activations behave like that, but almost all polynomial activations:

\begin{conj} \label{conj:strong}
    Let $k \geq 2$. There is an integer $R(k)$ such that, for every $r \geq R(k)$, there is a nonempty Zariski open subset $U_{k,r} \subseteq \KK[z]_{\leq r}$ with the following property:
    For every $\sigma \in U_{k,r}$ and all multivariate polynomials $p_1, \ldots, p_k$ that are nonconstant and pairwise distinct, the polynomials $\sigma(p_1), \ldots, \sigma(p_k)$ are linearly independent. 
\end{conj}

For our purposes of investigating polynomial MLPs, we need Conjecture~\ref{conj:strong} in its \emph{origin-passing variant}, which is the verbatim statement as above but only considering polynomials $\sigma$ and $p_j$ that satisfy $\sigma(0)=0$ and $p_j(0)=0$ for all $j = 1,\ldots, k$ (see Conjecture~\ref{conj:goat_swapped}).
Proving the origin-passing variant of the conjecture would resolve the identifiability question for MLPs with generic polynomial activations of large degree, as we explain in Section~\ref{sec:MLP}.
In fact, MLP identifiability already follows from the weaker Conjecture~\ref{conj:weakOrigin}.

We prove two restricted versions of Conjecture~\ref{conj:strong} and its origin-passing variant:
First, we provide proofs for the case $k=2$ (Propositions~\ref{prop:twoPols} and~\ref{prop:twoPolsOrigin}), which resolves identifiability for deep MLPs of width one (see Section~\ref{sec:MLP}).
Second, we prove Conjecture~\ref{conj:strong} and its origin-passing variant when the bound $R$ depends not only on the number $k$ of polynomials but also on their maximum degree (Theorems~\ref{thm:main-nonmonic} and \ref{thm:variable-degree-main} for polynomials with constant term and Theorems \ref{thm:equalDegreeOrigin} and \ref{thm:boundedDegreeOrigin} for origin-passing polynomials).
Note also that the original Newman--Slater result is independent of the degrees of the polynomials $p_1, \ldots, p_k$.
Our degree-dependent result solves the identifiability problem for shallow MLPs and for deep MLPs where the degrees of the generic polynomial activations grow from layer to layer (see Section~\ref{sec:MLP}).

Furthermore, Examples~\ref{ex:sigma_k_ex1}, \ref{ex:actOrign1}, and \ref{ex:actOrign2} give explicit families of fixed polynomial activations $\sigma$ that turn every collection of nonconstant, pairwise distinct (and origin-passing) polynomials $p_1,\ldots,p_k$ into linearly independent $\sigma(p_1),\ldots,\sigma(p_k)$. 
For a deep MLP with an origin-passing activation from such a family, identifiability holds without any growing-degree assumptions.

\section{Linear independence of polynomial compositions} 
\label{sec:general}
Let $\KK$ be a field of characteristic zero, $k\in\ZZ_{\geq 2}$, and $r\in\ZZ_{\geq 0}$.
Given a polynomial $\sigma\in \KK[z]_{\leq r}$ and polynomials $p_1,\dots,p_k\in \KK[x_1,\ldots,x_n]$, we are interested in linear relations among the compositions $\sigma(p_1),\dots,\sigma(p_k)$. 
We first observe that, for all conjectures and results proved in this article, it is sufficient to consider univariate polynomials.

\subsection{Reduction to one variable}
Let $p_1,\dots,p_k$ be pairwise distinct polynomials in $\KK[x_1,\dots,x_n]$, and assume that there is a nontrivial linear combination 
$$
\sum_{i=1}^k c_i\sigma(p_i(x_1,\dots,x_n))=0
$$
in $\KK[x_1,\dots,x_n]$. 
Composing with a general linear map $\ell:\AA^1\to\AA^n$ preserves the degrees of the $p_i$, keeps them pairwise distinct, and gives the relation
$$
\sum_{i=1}^k c_i\sigma(p_i(\ell(t)))=0
$$
in $\KK[t]$. 
Moreover, if the $p_i$ are origin-passing (i.e., $p_i(0)=0$), then so is the composition $p_i \circ \ell$.
Henceforth, we assume $p_1, \ldots, p_k \in \KK[t]$.

\subsection{Two polynomials}
The special case of two polynomials (i.e., $k=2$) can be easily settled. We now show that $R(2) = 3$ in Conjecture~\ref{conj:strong} and provide an explicit open set of activation polynomials.

\begin{prop}
\label{prop:twoPols}
Let $r\geq 3$ and write $\sigma(z)=a_0+a_1z+\cdots+a_rz^r$. Set $\Delta_2(\sigma)=2r a_ra_{r-2}-(r-1)a_{r-1}^2$ and
$$
\Delta_3(\sigma)=3r^2a_r^2a_{r-3}-3r(r-2)a_ra_{r-1}a_{r-2}+(r-1)(r-2)a_{r-1}^3.
$$
Then $U_r=\{\sigma\in\KK[z]_{\leq r}:a_r\Delta_2(\sigma)\Delta_3(\sigma)\neq 0\}$ is a nonempty Zariski open subset of $\KK[z]_{\leq r}$ such that, for every $\sigma\in U_r$ and every pair of distinct nonconstant polynomials $p,q\in\KK[t]$, the polynomials $\sigma(p)$ and $\sigma(q)$ are linearly independent. 
\end{prop}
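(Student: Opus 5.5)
The plan is to normalize $\sigma$ by a translation of the variable, so that $\Delta_2$ and $\Delta_3$ become (up to nonzero factors) just the coefficients of $z^{r-2}$ and $z^{r-3}$, and then to compare a putative relation $\sigma(q)=c\,\sigma(p)$ degree by degree. Nonemptiness of $U_r$ is immediate: for $\sigma=z^r+z^{r-2}+z^{r-3}$ we get $a_r=1$, $\Delta_2=2r\neq0$ and $\Delta_3=3r^2\neq0$. Openness is clear since $U_r$ is the complement of a hypersurface.

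\textbf{Step 1 (normalization).} For a constant $s\in\KK$ set $\tilde\sigma(z)=\sigma(z-s)$ and $\tilde p=p+s$, $\tilde q=q+s$. Then $\sigma(p)=\tilde\sigma(\tilde p)$ and $\sigma(q)=\tilde\sigma(\tilde q)$, and $\tilde p,\tilde q$ are still distinct and nonconstant. Choosing $s=a_{r-1}/(ra_r)$ kills the $z^{r-1}$ coefficient of $\tilde\sigma$. I would check by a direct expansion of $\sigma(z-s)$ that the new coefficients of $z^{r-2}$ and $z^{r-3}$ are nonzero multiples of $\Delta_2(\sigma)/a_r$ and $\Delta_3(\sigma)/a_r^2$, respectively. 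This is the classical Tschirnhaus computation; I expect the constants $2r$, $r-1$, $3r^2$, etc.\ to come out exactly as in the statement. So after dividing by $a_r$ we may assume
\[
\sigma(z)=z^r+b z^{r-2}+e z^{r-3}+(\text{lower terms}), \qquad b\neq0,\ e\neq0.
\]

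\textbf{Step 2 (degrees).} Suppose $\sigma(p)$ and $\sigma(q)$ are linearly dependent. Since $\deg\sigma=r\geq 3$ and $p,q$ are nonconstant, both compositions are nonzero, so $\sigma(q)=c\,\sigma(p)$ with $c\neq0$. Comparing degrees gives $r\deg q=r\deg p$, so $\deg p=\deg q=:d\geq1$. Comparing leading coefficients gives $c=\mu^r$ with $\mu=\operatorname{lc}(q)/\operatorname{lc}(p)$. Put $u=\mu p$ and $h=q-u$; then $\deg h<d$ or $h=0$. The relation becomes
\[
(q^r-u^r)+b\,(q^{r-2}-\mu^2u^{r-2})+e\,(q^{r-3}-\mu^3u^{r-3})+\cdots=0,
\]
using $c\,p^{j}=\mu^{r-j}u^{j}$.

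\textbf{Step 3 (case analysis).} If $h\neq0$ with $\deg h=m<d$, the term $r u^{r-1}h$ has degree $(r-1)d+m\geq(r-1)d$. Every other summand has strictly smaller degree: the remaining binomial terms of $q^r-u^r$ have degree at most $(r-2)d+2m<(r-1)d+m$, and all lower terms of $\sigma$ contribute degree at most $(r-2)d$. This contradicts the relation, so $h=0$, i.e.\ $q=\mu p$ with $\mu\neq1$ because $p\neq q$. Then the relation reads $\sum_j a_j(\mu^j-\mu^r)p^j=0$. Since $p$ is nonconstant, the powers $1,p,p^2,\dots$ are linearly independent, so $b(\mu^{r-2}-\mu^r)=0$ and $e(\mu^{r-3}-\mu^r)=0$. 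As $\mu\neq0$ and $b,e\neq0$, this forces $\mu^2=1$ and $\mu^3=1$, hence $\mu=1$, a contradiction.

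The only step I expect to require real care is Step 1: verifying that the shifted coefficients are exactly proportional to $\Delta_2$ and $\Delta_3$. The remaining argument is an elementary leading-term comparison.
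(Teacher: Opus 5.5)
Your proof is correct and uses the same ingredients as the paper's: the translation by $a_{r-1}/(ra_r)$, whose new $z^{r-2}$ and $z^{r-3}$ coefficients are indeed exactly $\Delta_2(\sigma)/(2ra_r)$ and $\Delta_3(\sigma)/(3r^2a_r^2)$; a leading-term comparison; and comparison of the $z^{r-2}$ and $z^{r-3}$ coefficients to force $\mu^2=\mu^3=1$. The only difference is the order. Because you translate first, the vanishing $z^{r-1}$ coefficient lets your degree argument also rule out a nonzero constant $h$. The paper instead first shows that $q-bp$ is a constant $c$, then pins $c$ down from the $z^{r-1}$ coefficient of $\sigma(bz+c)=b^r\sigma(z)$, and only then translates.
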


\begin{proof}
The set $U_r$ is Zariski open and nonempty since $z^r+z^{r-2}+z^{r-3}\in U_r$. Let $\sigma\in U_r$ and assume that $s\sigma(p)=\sigma(q)$ for some nonconstant $p,q\in\KK[t]$ and $s\in\KK^*$. Comparing degrees gives $\deg p=\deg q=: \delta$. If $b\in\KK^*$ is the ratio of the leading coefficient of $q$ to the leading coefficient of $p$, then comparison of leading coefficients in the equality $s\sigma(p)=\sigma(q)$ gives $s=b^r$.

Set $c:=q-bp$. If $c$ is nonconstant, then $\deg c<\delta$ and $q^r-b^rp^r$ has degree $(r-1)\delta+\deg c$, while every term of $\sigma(q)-b^r\sigma(p)$ coming from powers smaller than $r$ has degree at most $(r-1)\delta$. This contradicts $\sigma(q)=b^r\sigma(p)$. Hence, $c\in\KK$.

Since $p$ is nonconstant, the substitution map $\KK[z]\to\KK[t]$, $f\mapsto f(p)$, is injective.Therefore, $ \sigma(bp+c)= \sigma(q)=b^r\sigma(p)$ implies $\sigma(bz+c)=b^r\sigma(z)$. Comparing the coefficients of $z^{r-1}$ gives $c=(b-1)a_{r-1}/(ra_r)$. Set $\alpha:=-a_{r-1}/(ra_r)$ and $\widetilde\sigma(z):=\sigma(z+\alpha)$. Then, $c=(1-b)\alpha$ and $\widetilde\sigma(bz)=b^r\widetilde\sigma(z)$. The coefficient of $z^{r-1}$ in $\widetilde\sigma$ is zero, while those of $z^{r-2}$ and $z^{r-3}$ are $\Delta_2(\sigma)/(2ra_r)$ and $\Delta_3(\sigma)/(3r^2a_r^2)$, respectively. They are nonzero since $\sigma\in U_r$. 
Comparing the coefficients of $z^{r-2}$ and $z^{r-3}$ in $\widetilde\sigma(bz)=b^r\widetilde\sigma(z)$ gives $b^{r-2}=b^r$ and $b^{r-3}=b^r$, i.e., $b^2=b^3=1$. Thus, $b=1$, which implies that $c=0$ and $p=q$.
\end{proof}

\subsection{Polynomials of equal degree}

Next, we prove a version of Conjecture~\ref{conj:strong} where the lower bound $R$ depends not only on the number $k$ of polynomials but also on their degree.
We begin with the case where the polynomials $p_1,\ldots,p_k$ have the same degree. This is the natural first setting for the problem since the coefficients of degree $m$ polynomials form an affine space and the condition that $\sigma(p_1),\ldots,\sigma(p_k)$ are linearly dependent is closed in the corresponding parameter space. We will prove that, after fixing $k$ and $m$, a general polynomial $\sigma$ of sufficiently large degree separates every ordered $k$-tuple of pairwise distinct degree $m$ polynomials in the sense that the compositions $\sigma(p_1),\ldots,\sigma(p_k)$ are linearly independent.

\begin{lemm}\label{lem:filtered-rank-bound}
Let $p_1,\dots,p_k\in \KK[t]$ be pairwise distinct polynomials of the same degree $m\geq 1$ and let $(\lambda_1,\cdots,\lambda_k)\in\KK^{k} \setminus \{ (0,\ldots,0) \}$. Consider the $\KK$-linear map
\begin{equation}\label{eq:T-same-degree}
T_{p,\lambda}:\KK[z]_{\leq r}\longrightarrow \KK[t]_{\leq rm},\qquad
\sigma\longmapsto \sum_{i=1}^k \lambda_i\sigma(p_i(t)).
\end{equation}
Then, $\rank T_{p,\lambda}\geq \lfloor(r+1)/k\rfloor$ and
$$
\dim\ker T_{p,\lambda}\leq (r+1)-\left\lfloor\frac{r+1}{k}\right\rfloor.
$$
\end{lemm}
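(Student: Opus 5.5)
The plan is to bound the rank of $T_{p,\lambda}$ from below by exhibiting $B:=\lfloor (r+1)/k\rfloor$ elements of $\KK[z]_{\leq r}$ whose images are linearly independent. The kernel bound then follows from rank--nullity. I will use the monomial basis $1,z,\dots,z^r$, so that $T_{p,\lambda}(z^j)=s_j:=\sum_{i=1}^k\lambda_i p_i^j$. I split the exponents $0,\dots,r$ into $B$ disjoint windows of $k$ consecutive integers, $W_b=\{bk,\dots,bk+k-1\}$ for $0\le b<B$ (leftover exponents are discarded), and aim to extract one vector per window so that the resulting $B$ images are independent.

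\emph{Step 1: no window is killed.} Suppose $s_j=0$ for all $j\in W_b$. Then $\lambda$ lies in the kernel of the $k\times k$ matrix $\bigl(p_i^{\,bk+s}\bigr)_{0\le s<k,\ 1\le i\le k}$ over the field $\KK(t)$. Its determinant is $\prod_i p_i^{bk}\prod_{i<l}(p_l-p_i)$. This is nonzero because each $p_i$ has degree $m\ge1$, hence is nonzero, and the $p_i$ are pairwise distinct. So $\lambda=0$, a contradiction. More generally, the same Vandermonde argument shows that for any nonzero $\lambda$ the sequence $(s_j)$ cannot vanish on $k$ consecutive indices.

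\emph{Step 2: upgrade to independence.} Nonvanishing in each window does not by itself give independence across windows, so I need a triangularity device. I would control degrees via the expansion at $t=\infty$. Write $p_i=c_it^m(1+u_i)$ with $u_i\in t^{-1}\KK[[t^{-1}]]$. Then
$$t^{-jm}s_j=\sum_{e\ge0}\Bigl(\sum_i\lambda_ic_i^{\,j}P_{i,e}(j)\Bigr)t^{-e},$$
where each $P_{i,e}$ is a polynomial in $j$ of degree at most $e$. The goal is to choose the representatives from the windows so that their images are ``upper triangular'' with respect to the degree filtration on $\KK[t]_{\le rm}$; for instance, so that they have strictly increasing degrees.

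The alternative I would try first avoids degree bookkeeping altogether. Suppose some nonzero $\sigma$ in the span of one representative per window lies in $\ker T_{p,\lambda}$. Then the sequence $(s_j)$ satisfies the linear recurrence over $\KK[t]$ with characteristic polynomial $\prod_i(X-p_i)$, which has distinct roots. Linear dependences of the $s_j$ can therefore be transported along windows by this recurrence, and I would try to reduce any such dependence to a relation of the type excluded in Step~1.

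\emph{Main obstacle.} The main obstacle is Step~2. Step~1 only guarantees one nonzero image per window, and the degree drops $e_j$, meaning the order of the first nonvanishing coefficient above, could a priori be large and cause degree collisions between windows. My first attempt will be to show that $e_j$ is bounded in terms of $k$ alone. Fix a nonvanishing coefficient index $e$. The exponential-polynomial $j\mapsto\sum_i\lambda_ic_i^{\,j}P_{i,e}(j)$ has at most $k$ exponential terms, counted with polynomial multiplicity, so it cannot vanish on too many consecutive $j$. I would combine this with a greedy choice of one index per window, made so that the resulting degrees $jm-e_j$ strictly increase. This would give $B$ images of distinct degrees, hence $\rank T_{p,\lambda}\ge B$.
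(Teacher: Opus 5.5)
\textbf{Gap.} Your Step~1 is correct. You also rightly note that nonvanishing of $s_j$ in each window does not give independence. But Step~2 is the whole content of the lemma, and you leave it as a list of strategies, the main one of which does not work.

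The degree drop $e_j$ is \emph{not} bounded in terms of $k$ alone. For $k=2$, $p_1=t^m$, $p_2=t^m+1$, $\lambda=(1,-1)$, one gets $t^{-jm}s_j=1-(1+t^{-m})^j=-jt^{-m}+\cdots$, so $e_j=m$ for all $j\geq 1$.

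Your claim that $a_e(j):=\sum_i\lambda_ic_i^{\,j}P_{i,e}(j)$ cannot vanish on $k$ consecutive $j$ is also false for a general index $e$, since $\deg P_{i,e}$ can be $e$. For instance, for the single polynomial $p_1=t^m+t^{m-1}$ one has $a_e(j)=\lambda_1\binom{j}{e}$, which vanishes for $j=0,\dots,e-1$.

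Even with a bound on the $e_j$, picking one index per window would not by itself make the degrees $jm-e_j$ pairwise distinct. The recurrence-transport alternative is not developed far enough to assess.

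\textbf{Missing idea.} You do not need to bound $e_j$ or choose representatives cleverly; you need a \emph{uniform} drop. Let $\nu_0$ be the smallest $e$ such that the sequence $j\mapsto a_e(j)$ is not identically zero; it exists by your Step~1. Since $a_e\equiv 0$ for every $e<\nu_0$, every $s_j$ has degree at most $jm-\nu_0$, with equality exactly when $a_{\nu_0}(j)\neq 0$. Hence all $s_j$ with $a_{\nu_0}(j)\neq0$ have pairwise distinct degrees and are automatically linearly independent.

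It remains to show that $a_{\nu_0}$ has no $k$ consecutive zeros, and this is where minimality enters. Write $p_i=t^m v_i(q)$ with $q=t^{-1}$ and $v_i(0)=c_i$. Then $F_j:=t^{-jm}s_j=\sum_i\lambda_i v_i^{\,j}$ is annihilated by $\prod_i(E-v_i(q))=\sum_{l=0}^k\gamma_l(q)E^l$, where $E$ is the shift in $j$. Take the coefficient of $q^{\nu_0}$ in $\sum_l\gamma_l(q)F_{j+l}(q)=0$. Every contribution involving $a_\nu$ with $\nu<\nu_0$ vanishes identically, which leaves $\sum_{l=0}^k\gamma_l(0)\,a_{\nu_0}(j+l)=0$. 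This is a recurrence with characteristic polynomial $\prod_i(X-c_i)$, whose leading coefficient is $1$ and whose constant term $(-1)^k\prod_i c_i$ is nonzero. Running it forward and backward shows that a nonzero solution has no $k$ consecutive zeros. So each of your windows $W_b$ contains some $j$ with $a_{\nu_0}(j)\neq 0$, giving $\lfloor (r+1)/k\rfloor$ images of distinct degrees.

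This is exactly the paper's proof. Your ``multiplicity $k$'' heuristic holds only at the minimal index $\nu_0$, and this reduction of the recurrence modulo lower orders is what proves it there.
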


\begin{proof}
Set $q=t^{-1}$. Since each $p_i$ has degree $m$, we may write $p_i(t)=t^mu_i(q)$, where $u_i(q)=\alpha_i+u_{i,1}q+\cdots+u_{i,m}q^m\in \KK[q]$ and $\alpha_i\in \KK^*$. Moreover, the power series $u_1(q),\dots,u_k(q)$ are pairwise distinct.

For every $h\geq 0$, set $Q_h(t):=\sum_i\lambda_i p_i(t)^h$. Then, $Q_h(t)=t^{mh}F_h(q)$, where $F_h(q)=\sum_i\lambda_i u_i(q)^h$. Write $F_h(q)=\sum_{\nu\geq 0}a_\nu(h)q^\nu$. If all sequences $a_\nu(h)$ were zero, then $F_h(q)=0$ for every $h\geq 0$. In particular, $F_0(q)=\cdots=F_{k-1}(q)=0$.
The latter system of equations can be written as 
$V(q)(\lambda_1,\dots,\lambda_k)^\top=0$, where 
$$
V(q):=\bigl(u_i(q)^h\bigr)_{0\leq h\leq k-1,\ 1\leq i\leq k}.
$$
This Vandermonde matrix has determinant $\prod_{i<j}(u_j(q)-u_i(q))\neq 0$ in $\KK[[q]]$. Hence, $V(q)(\lambda_1,\dots,\lambda_k)^\top=0$ forces $\lambda_1=\cdots=\lambda_k=0$, a contradiction.

Let $\nu_0$ be the smallest integer such that $a_{\nu_0}(h)$ is not the zero sequence. The sequence $F_h(q)$ is annihilated by $\prod_i(E-u_i(q))$, where $E$ is the shift operator in $h$. Thus, there are $c_0(q),\dots,c_k(q)\in \KK[[q]]$, with $c_k(q)=1$, such that
\begin{equation}\label{eq:recurrence-same-degree-q}
\sum_{j=0}^k c_j(q)F_{h+j}(q)=0
\end{equation}
for every $h\geq 0$. Looking at the coefficient of $q^{\nu_0}$ in \eqref{eq:recurrence-same-degree-q}, we see that all lower order terms vanish by the minimality of $\nu_0$ and, setting $a(h) := a_{\nu_0}(h)$, that
\begin{equation}\label{eq:recurrence-same-degree}
\sum_{j=0}^k c_j(0)a(h+j)=0.
\end{equation}
The coefficients $c_j(0)$ are those of $\prod_i(X-\alpha_i)$. In particular, $c_0(0)$ and $c_k(0)$
are nonzero. 
Therefore, the nonzero sequence $a(h)$ cannot have $k$ consecutive zero values.
(Indeed, if $a(n)=a(n+1)=\ldots=a(n+k-1)$ for some $n$, applying \eqref{eq:recurrence-same-degree} for $h=n$ yields $c_k(0)a(n+k)=0$, i.e., $a(n+k)=0$. Then, applying \eqref{eq:recurrence-same-degree} for $h=n+1, n+2, \ldots$, shows that $a(h)=0$ for all $h \geq n$. Similarly, applying \eqref{eq:recurrence-same-degree} for $h=n-1$ gives $c_0(0) a(n-1)=0$, i.e., $a(n-1)=0$. Continuing this for $h=n-2, n-3, \ldots$ shows that the whole sequence $a(h)$ must be zero, a contradiction).

It follows that in every block of $k$ consecutive integers, there is an $h$ with $a_{\nu_0}(h)\neq 0$. Hence, among $0,\dots,r$, there are at least $\lfloor(r+1)/k\rfloor$ such integers. For each of them, $\deg Q_h=mh-\nu_0$, and these degrees are distinct. Thus, the corresponding $Q_h=T_{p,\lambda}(z^h)$ are linearly independent. This gives the rank estimate, and the kernel estimate follows from $\dim \KK[z]_{\leq r}=r+1$.
\end{proof}

\begin{thm}\label{thm:main-nonmonic}
Let $m,k\geq 1$ and assume 
$$
r\geq k^2(m+2)-1.
$$ 
Then, there exists a nonempty Zariski open subset $U_{k,m,r}\subseteq\KK[z]_{\leq r}$ such that, for every $\sigma\in U_{k,m,r}$ and every $k$-tuple of pairwise distinct polynomials $p_1,\dots,p_k\in \KK[t]$ of degree $m$, the polynomials $\sigma(p_1),\dots,\sigma(p_k)$ are linearly independent in $\KK[t]$.
\end{thm}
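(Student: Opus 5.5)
We use a dimension count on an incidence variety. Let $P_m\cong \KK^*\times\KK^{m}$ be the space of degree-$m$ polynomials in $t$, an irreducible variety of dimension $m+1$. Let
$$
B\subseteq P_m^k\times(\PP^{k-1})
$$
be the locally closed set of pairs $(p,[\lambda])$ with $p=(p_1,\dots,p_k)$ pairwise distinct. It has dimension $k(m+1)+k-1=k(m+2)-1$. Consider the incidence set
$$
\Ical=\{(p,[\lambda],\sigma)\in B\times \KK[z]_{\leq r} : T_{p,\lambda}(\sigma)=0\}.
$$
This is well defined because $T_{p,\lambda}$ is linear in $\lambda$. It is a Zariski closed subset of $B\times\KK[z]_{\leq r}$, cut out by the bilinear-type conditions that the coefficients of $\sum_i\lambda_i\sigma(p_i)$ vanish. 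The linear dependence of $\sigma(p_1),\dots,\sigma(p_k)$ for some admissible $p$ is exactly the condition that $\sigma$ lies in the image $\pi(\Ical)$ under the projection $\pi$ to $\KK[z]_{\leq r}$.

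The fibre of $\Ical$ over a point $(p,[\lambda])\in B$ is $\ker T_{p,\lambda}$. By Lemma~\ref{lem:filtered-rank-bound}, this is a linear space of dimension at most $(r+1)-\lfloor (r+1)/k\rfloor$. Stratify $B$ by the rank of $T_{p,\lambda}$ into finitely many locally closed pieces, one for each rank value. Over each piece, $\Ical$ is a vector bundle, so we get the uniform bound
$$
\dim\Ical\leq k(m+2)-1+(r+1)-\left\lfloor\frac{r+1}{k}\right\rfloor .
$$
By the hypothesis $r+1\geq k^2(m+2)$, we have $\lfloor (r+1)/k\rfloor\geq k(m+2)>k(m+2)-1$. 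Hence $\dim\Ical\leq r$, which is strictly less than $r+1=\dim\KK[z]_{\leq r}$.

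By Chevalley's theorem, $\pi(\Ical)$ is constructible, and its dimension is at most $\dim\Ical<r+1$. Its Zariski closure $Z$ is therefore a proper closed subset. We take $U_{k,m,r}:=\KK[z]_{\leq r}\setminus Z$, which is nonempty and open. Now let $\sigma\in U_{k,m,r}$ and let $p_1,\dots,p_k$ be pairwise distinct of degree $m$. If $\sum_i\lambda_i\sigma(p_i)=0$ with $\lambda\neq 0$, then $(p,[\lambda],\sigma)\in\Ical$, so $\sigma\in Z$, a contradiction. This proves the theorem.

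The main step to get right is the dimension estimate for $\Ical$: the fibre dimension varies with $(p,\lambda)$, and one must use the rank stratification (upper semicontinuity of $\dim\ker$) rather than a single fibre. The uniform bound from Lemma~\ref{lem:filtered-rank-bound} makes this routine. The remaining points are bookkeeping: $B$ is not closed, since pairwise distinctness and leading coefficients $\neq 0$ are open conditions, but that is harmless because we only need a dimension bound and then take a closure in the target.

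For $k=1$ the statement reduces to $\sigma(p_1)\neq 0$. This is covered by the same argument, or directly by requiring the leading coefficient of $\sigma$ to be nonzero.
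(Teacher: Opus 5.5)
Your proposal is correct and follows essentially the same route as the paper's proof: an incidence variety over pairs $(p,[\lambda])$, with fibres bounded via Lemma~\ref{lem:filtered-rank-bound} and a dimension count showing that the projection to the space of activations is not dominant. The only difference is cosmetic. You keep $\sigma$ in the affine space $\KK[z]_{\leq r}$ and get $\dim\Ical\leq r<r+1$, whereas the paper works in $\PP(\KK[z]_{\leq r})$, gets $\dim\Ical\leq r-1<r$, and then removes the affine cone over the closure of the image.
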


\begin{proof}
Let $\Pcal_m\subseteq\KK[t]_{\leq m}$ be the open subset of polynomials of degree precisely $m$.
Furthermore, let $\Pcal_{k,m}\subseteq\Pcal_m^k$ be the open subset of pairwise distinct $k$-tuples of such polynomials. Note that $\dim\Pcal_{k,m}=k(m+1)$.

Consider the incidence variety
\begin{align} \label{eq:incidenceVar}
    \Ical_{k,m,r}:=\left\{([\sigma],p_1,\dots,p_k,[\lambda])\in\PP(\KK[z]_{\leq r})\times\Pcal_{k,m}\times\PP^{k-1}\ :\ \sum_{i=1}^k\lambda_i\sigma(p_i(t))=0\right\}
\end{align}
and the projection $\pi_{2,3}:\Ical_{k,m,r}\to\Pcal_{k,m}\times\PP^{k-1}$. For fixed $(p,\lambda)$, the fiber is $\PP(\ker T_{p,\lambda})$, where $T_{p,\lambda}$ is the map in \eqref{eq:T-same-degree}. Set $L:=\lfloor(r+1)/k\rfloor$. By Lemma~\ref{lem:filtered-rank-bound}, every nonempty fiber has dimension at most $r-L$. Hence,
\begin{equation}\label{eq:incidence-nonmonic-bound}
\dim\Ical_{k,m,r}\leq k(m+1)+(k-1)+r-L.
\end{equation}
Since $r\geq k^2(m+2)-1$, we have $L\geq k(m+2)$. Substituting this in \eqref{eq:incidence-nonmonic-bound}, we get $\dim\Ical_{k,m,r}\leq r-1$.

Now, consider the projection $\pi_1:\Ical_{k,m,r}\to\PP(\KK[z]_{\leq r})$. Its image variety $C:=\overline{\pi_1(\Ical_{k,m,r})}$  is a proper closed subset of $\PP(\KK[z]_{\leq r}) \cong \PP^r$. If $\widehat C\subseteq\KK[z]_{\leq r}$ denotes the affine cone over $C$, then $U_{k,m,r}=\KK[z]_{\leq r}\setminus\widehat C$ is nonempty and open. By construction, a point $\sigma\in U_{k,m,r}$ cannot yield a nontrivial relation among $\sigma(p_1),\dots,\sigma(p_k)$ for any pairwise distinct $p_i$ of degree $m$.
\end{proof}

\subsection{Polynomials of bounded degree}

We now pass from polynomials of the same fixed degree to polynomials of bounded degree. 
For fixed $k$ and $m$, our goal is to show that, for general $\sigma$ of sufficiently large degree, the compositions $\sigma(p_1),\ldots,\sigma(p_k)$ are linearly independent for every $k$-tuple of pairwise distinct nonconstant polynomials $p_i$ of degree at most $m$.
By treating simultaneously all possible degenerations in which some leading coefficients of the $p_i$ vanish, the same incidence construction as above gives a closed bad locus of $\sigma$.

\begin{lemm}\label{lem:variable-degree-rank-bound}
Let $p_1,\dots,p_k\in \KK[t]$ be pairwise distinct nonconstant polynomials of degree at most~$m$ and let $(\lambda_1,\cdots,\lambda_k)\in\KK^{k} \setminus \{  (0,\ldots,0) \}$. Consider the $\KK$-linear map
\begin{equation}\label{eq:T-variable-degree}
T_{p,\lambda}:\KK[z]_{\leq r}\longrightarrow \KK[t]_{\leq rm},\qquad
\sigma\longmapsto \sum_{i=1}^k \lambda_i\sigma(p_i(t)).
\end{equation}
Then,
\begin{equation}\label{eq:rank-variable-degree-bound}
\rank T_{p,\lambda}\geq
\left\lfloor\frac{r}{k}-\frac{m(k-1)}{2}\right\rfloor.
\end{equation}
Consequently,
\begin{equation}\label{eq:kernel-variable-degree-bound}
\dim\ker T_{p,\lambda}\leq
(r+1)-\left\lfloor\frac{r}{k}-\frac{m(k-1)}{2}\right\rfloor.
\end{equation}
\end{lemm}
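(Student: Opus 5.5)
Since the $p_i$ may now have different degrees, we cannot directly reuse the power-series expansion in $q=t^{-1}$ with nonzero leading constants $\alpha_i$. The plan is to reduce to the equal-degree mechanism by grouping. Let $d_1>d_2>\dots>d_s\geq 1$ be the distinct degrees occurring among the $p_i$ with $\lambda_i\neq 0$; we discard indices with $\lambda_i=0$, which only decreases $k$ and only helps the bound. For each degree $d$, let $G_d$ be the set of such indices with $\deg p_i=d$, and set $Q^{(d)}_h:=\sum_{i\in G_d}\lambda_i p_i^h$. Then $Q_h:=T_{p,\lambda}(z^h)=\sum_d Q^{(d)}_h$. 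As in Lemma~\ref{lem:filtered-rank-bound}, for each group we write $Q^{(d)}_h=t^{dh}F^{(d)}_h(q)$ and let $\nu_d$ be the minimal order of the (not identically zero, by the Vandermonde argument) coefficient sequences. The argument there shows that, for $h$ outside a set meeting every block of $|G_d|$ consecutive integers in at most $|G_d|-1$ elements, we have $\deg Q^{(d)}_h=dh-\nu_d$ exactly. Moreover $\nu_d$ is bounded: the Vandermonde determinant $\prod_{i<j}(u_j-u_i)$ has order at most $\binom{|G_d|}{2}d$ in $q$, since each $u_j-u_i$ is a nonzero polynomial of degree $\leq d$ in $q$. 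Solving the Vandermonde system should give $\nu_d\leq \binom{|G_d|}{2}d$.

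Next we compare groups. For $h$ large, the top group $d_1$ dominates: we need $d_1h-\nu_{d_1}>d_2h$, i.e.\ $(d_1-d_2)h>\nu_{d_1}$. Since $d_1-d_2\geq 1$, this holds once $h>\nu_{d_1}$. For such $h$ with $a_{\nu_{d_1}}(h)\neq 0$, we get $\deg Q_h=d_1h-\nu_{d_1}$, and these degrees are pairwise distinct in $h$. Hence the $Q_h$ are linearly independent and the rank is at least the number of such $h$ in $[0,r]$. Counting gives at least $\lfloor (r+1-h_0)/|G_{d_1}|\rfloor$ good values, where $h_0\approx\nu_{d_1}+1$. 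Using $|G_{d_1}|\leq k$ and $\nu_{d_1}\leq \binom{k}{2}m$ yields roughly $\lfloor r/k - \binom{k}{2}m/k\rfloor=\lfloor r/k-m(k-1)/2\rfloor$, which matches \eqref{eq:rank-variable-degree-bound}. The kernel bound then follows from rank--nullity.

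The main obstacle is making the bound $\nu_{d_1}\leq\binom{|G|}{2}d_1$ rigorous. I would argue as follows: $F_0,\dots,F_{k-1}$ cannot all vanish modulo $q^{\nu+1}$ for $\nu=\operatorname{ord}\det V$. If they did, then multiplying $V\lambda\equiv 0$ by the adjugate gives $\det V\cdot\lambda\equiv 0 \pmod{q^{\nu+1}}$, contradicting $\lambda\neq0$. Hence some $F_h$ with $h<k$ has a nonzero coefficient in degree $\leq\nu$, so $\nu_{d}\leq\nu$. Care is also needed with the off-by-one between $r+1$ and $r$ and with the threshold $h_0$; the stated bound has slack (it uses $r/k$ rather than $(r+1)/k$) that should absorb these. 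One more point: $a(h)$ cannot have $|G|$ consecutive zeros, and we only require good $h$ in $[h_0,r]$, so the counting follows as in Lemma~\ref{lem:filtered-rank-bound}.
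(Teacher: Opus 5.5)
Your proposal is correct and is essentially the paper's proof. Like the paper, you restrict to indices with $\lambda_i\neq 0$, isolate the top-degree group $I$, bound $\nu_0\leq \ord_q\det V\leq D\binom{s}{2}$ by the adjugate argument, use the recurrence with $c_0,c_s\neq 0$ to rule out $s$ consecutive zeros of $a_{\nu_0}(h)$, and take $h>\nu_0$ so that the top part of degree $Dh-\nu_0$ cannot be cancelled by terms of degree at most $(D-1)h$. Two small points to tidy: the Vandermonde system should be the square $s\times s$ one built from $F_0,\dots,F_{s-1}$, not $F_0,\dots,F_{k-1}$. And the ``roughly'' in your final count is exact, since $\frac{r-\nu_0}{s}\geq \frac{r}{s}-\frac{D(s-1)}{2}\geq\frac{r}{k}-\frac{m(k-1)}{2}$.
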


\begin{proof}
Let $S\!:=\!\{i:\lambda_i\neq 0\}$ and $D\!:=\!\max_{i\in S}\deg p_i\geq 1$. Further, let \mbox{$I:=\{i\in S:\deg p_i=D\}$} and set $s:=|I| \geq 1$. For $i\in I$, write $p_i(t)=t^D u_i(q)$, where $q:=t^{-1}$ and $u_i(q)\in \KK[q]$ with $u_i(0)\neq 0$. Note that the $u_i(q)$, for $i\in I$, are pairwise distinct.

For every $h\geq 0$, consider
$$
Q_h^{\rm top}(t):=\sum_{i\in I}\lambda_i p_i(t)^h=t^{Dh}F_h(q),\quad
\text{ where }
F_h(q):=\sum_{i\in I}\lambda_i u_i(q)^h.
$$
Write $F_h(q)=\sum_{\nu\geq 0}a_\nu(h)q^\nu$. As in the proof of Lemma~\ref{lem:filtered-rank-bound}, there is a smallest $\nu_0$ such that $a_{\nu_0}(h)$ is not the zero sequence.

Let
$$
D_I:=\ord_q\left(\prod_{i<j,\ i,j\in I}(u_j(q)-u_i(q))\right),
$$
with $D_I=0$ if $s=1$. We claim that $\nu_0\leq D_I$. If $\nu_0>D_I$, then $F_h(q)$ is zero modulo $q^{D_I+1}$ for all $h \geq 0$. 
In particular, for the Vandermonde matrix 
$$
V(q):=\bigl(u_i(q)^h\bigr)_{0\leq h\leq s-1,\ i \in I}
$$
and the vector $\lambda_I := (\lambda_i)_{i \in I}$, we obtain
$V(q)\lambda_I^\top=0$ modulo $q^{D_I+1}$.
Multiplying this system by the adjugate matrix of $V(q)$ gives that $\det V(q)\lambda_I=0$ modulo $q^{D_I+1}$. 
Since $\lambda_i \neq 0$ for all $i \in I$, we conclude that $\det V(q)$ is divisible by $q^{D_I+1}$.
However, 
$\ord_q\det V(q)=D_I$, a contradiction.Therefore,
\begin{equation}\label{eq:DI-bound}
\nu_0\leq D_I\leq D\binom{s}{2}\leq m\binom{k}{2}.
\end{equation}

As in the proof of Lemma~\ref{lem:filtered-rank-bound}, the sequence $a_{\nu_0}(h)$ satisfies a recurrence
$\sum_{j=0}^s c_j a_{\nu_0}(h+j)=0$
 with $c_0 \neq 0$ and $c_s \neq 0$, obtained from $\prod_{i\in I}(E-u_i(q))$ by taking the coefficient of $q^{\nu_0}$. Thus, it cannot have $s$ consecutive zero values. 
Hence, among $\nu_0+1,\ldots,r$, there are at least $\lfloor(r-\nu_0)/s\rfloor$ values of $h$ with $a_{\nu_0}(h)\neq0$. Moreover, \eqref{eq:DI-bound} gives
$$
\frac{r-\nu_0}{s}\geq \frac{r}{s}-\frac{D(s-1)}{2}
\geq \frac{r}{k}-\frac{m(k-1)}{2}.
$$
Therefore, there are at least as many values of $h$ as the quantity in \eqref{eq:rank-variable-degree-bound} with $a_{\nu_0}(h)\neq0$ and $\nu_0 < h \leq r$.

For $h$ with $a_{\nu_0}(h) \neq 0$, we have that $\deg Q_h^{\rm top}=Dh-\nu_0$. 
Moreover, $\deg (\sum_{i\in S \setminus I}\lambda_i p_i(t)^h)\leq (D-1)h$.
Whenever $h>\nu_0$, the part of degree $Dh-\nu_0$ cannot be canceled and so
$T_{p,\lambda}(z^h) = \sum_{i\in S }\lambda_i p_i(t)^h$ has degree $Dh-\nu_0$.
Therefore, those polynomials $T_{p,\lambda}(z^h)$ have distinct degrees and are linearly independent. This proves \eqref{eq:rank-variable-degree-bound}, and \eqref{eq:kernel-variable-degree-bound} follows from $\dim \KK[z]_{\leq r}=r+1$.
\end{proof}

\begin{thm}\label{thm:variable-degree-main}
Let $m,k\geq 1$ and assume
\begin{equation}\label{eq:main-bound-variable-degree}
r \geq k\left(k(m+1)+k+\frac{m(k-1)}{2}\right)
=\frac{k}{2}(3km+4k-m).
\end{equation}
Then, there exists a nonempty Zariski open subset $U_{k,m,r}\subseteq\KK[z]_{\leq r}$ such that, for every $\sigma\in U_{k,m,r}$ and every $k$-tuple of pairwise distinct nonconstant polynomials $p_1,\dots,p_k\in \KK[t]$ of degree at most~$m$, the polynomials $\sigma(p_1),\dots,\sigma(p_k)$ are linearly independent in $\KK[t]$.
\end{thm}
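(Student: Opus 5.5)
We follow the same incidence-variety argument as in the proof of Theorem~\ref{thm:main-nonmonic}, replacing Lemma~\ref{lem:filtered-rank-bound} by Lemma~\ref{lem:variable-degree-rank-bound}. Let $\Pcal_{\leq m}\subseteq\KK[t]_{\leq m}$ denote the set of nonconstant polynomials of degree at most $m$; it is an open subset of an affine space of dimension $m+1$. Let $\Pcal_{k,\leq m}\subseteq\Pcal_{\leq m}^k$ be the (open) locus of pairwise distinct $k$-tuples, of dimension $k(m+1)$. Define the incidence variety
$$
\Ical:=\Bigl\{([\sigma],p_1,\dots,p_k,[\lambda])\in\PP(\KK[z]_{\leq r})\times\Pcal_{k,\leq m}\times\PP^{k-1}\ :\ \sum_{i=1}^k\lambda_i\sigma(p_i)=0\Bigr\},
$$
which is closed in the product since the defining condition is bilinear in $(\sigma,\lambda)$ and polynomial in the coefficients of the $p_i$.

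Next, we bound the dimension of $\Ical$ via the projection $\pi_{2,3}$ onto $\Pcal_{k,\leq m}\times\PP^{k-1}$. The fiber over $(p,\lambda)$ is $\PP(\ker T_{p,\lambda})$. Set $L:=\lfloor r/k-m(k-1)/2\rfloor$. By \eqref{eq:kernel-variable-degree-bound}, each fiber has dimension at most $r-L$. Hence, by the fiber dimension theorem applied to each irreducible component,
$$
\dim\Ical\leq k(m+1)+(k-1)+r-L.
$$
The bound \eqref{eq:main-bound-variable-degree} gives $r/k-m(k-1)/2\geq k(m+1)+k$, and the right-hand side is an integer, so $L\geq k(m+1)+k$. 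Therefore $\dim\Ical\leq r-1<r=\dim\PP(\KK[z]_{\leq r})$.

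Consequently, the closure $C$ of $\pi_1(\Ical)$ is a proper closed subset of $\PP^r$. Define $U_{k,m,r}$ as the complement of the affine cone $\widehat C$ in $\KK[z]_{\leq r}$. This set is nonempty and Zariski open. If $\sigma\in U_{k,m,r}$ and $\sum\lambda_i\sigma(p_i)=0$ for some nonzero $\lambda$, then $\sigma\neq0$ and $([\sigma],p,[\lambda])\in\Ical$, a contradiction. Hence every $\sigma\in U_{k,m,r}$ separates every admissible $k$-tuple.

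The substantive work is Lemma~\ref{lem:variable-degree-rank-bound}, which is already established, so the main point requiring care is bookkeeping. First, the parameter space must contain all degenerations with varying degrees, and it does, since we range over $\KK[t]_{\leq m}$ rather than a fixed-degree stratum. Second, the kernel bound must be uniform over all fibers, and Lemma~\ref{lem:variable-degree-rank-bound} provides exactly this, independent of which leading coefficients vanish. Third, the floor arithmetic must produce $L\geq k(m+2)$, which holds as shown above. No constructibility issue arises: the fiber dimension estimate only needs an upper bound on every fiber.
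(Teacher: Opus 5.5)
Your proposal is correct and follows essentially the same route as the paper: the same incidence variety over $\PP(\KK[z]_{\leq r})\times\Pcal^+_{k,\leq m}\times\PP^{k-1}$, the uniform fiber bound from Lemma~\ref{lem:variable-degree-rank-bound}, the estimate $L\geq k(m+2)$ giving $\dim\Ical\leq r-1$, and the complement of the affine cone over $\overline{\pi_1(\Ical)}$. Your added remarks (closedness of the incidence condition, the integrality step in the floor bound, and needing only an upper bound on fiber dimensions) are correct refinements of steps the paper states more briefly.
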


\begin{proof}
Let $\Pcal_{\leq m}^+\subseteq\KK[t]_{\leq m}$ be the open subset of nonconstant polynomials. Moreover, let $\Pcal_{k,\leq m}^+\subseteq(\Pcal_{\leq m}^+)^k$ be the open subset of pairwise distinct $k$-tuples of such polynomials. Note that $\dim\Pcal_{k,\leq m}^+=k(m+1)$.

Consider the incidence variety
\begin{align}\label{eq:incidenceBounded}
    \Ical_{k,m,r}:=\left\{([\sigma],p_1,\dots,p_k,[\lambda])\in\PP(\KK[z]_{\leq r})\times\Pcal_{k,\leq m}^+\times\PP^{k-1}\ :\ \sum_{i=1}^k\lambda_i\sigma(p_i(t))=0\right\}.
\end{align}
The fiber over $(p,\lambda)$ under the projection $\pi_{2,3}:\Ical_{k,m,r}\to\Pcal_{k,\leq m}^+\times\PP^{k-1}$ is $\PP(\ker T_{p,\lambda})$, where $T_{p,\lambda}$ is the map in \eqref{eq:T-variable-degree}. Set $L:=\left\lfloor r/k-m(k-1)/2\right\rfloor$. By Lemma~\ref{lem:variable-degree-rank-bound}, every nonempty fiber has dimension at most $r-L$. Hence,
\begin{equation}\label{eq:variable-degree-incidence-dim-bound}
\dim\Ical_{k,m,r}\leq k(m+1)+(k-1)+r-L.
\end{equation}
The assumption \eqref{eq:main-bound-variable-degree} gives $L\geq k(m+1)+k$. Substituting this in \eqref{eq:variable-degree-incidence-dim-bound}, we get $\dim\Ical_{k,m,r}\leq r-1$.

Now, let $C:=\overline{\pi_1(\Ical_{k,m,r})}$, where $\pi_1: \Ical_{k,m,r} \to \PP(\KK[z]_{\leq r})$ is the projection onto the first factor. Then, $C$ is a proper closed subset of $\PP(\KK[z]_{\leq r}) \cong \PP^r$. If $\widehat C\subset\AA^{r+1}$ denotes its affine cone, the open set $U_{k,m,r}=\AA^{r+1}\setminus\widehat C$ has the desired property by definition of the incidence variety.
\end{proof}

We wish to point out that it would also be interesting to prove the following weaker version of Conjecture~\ref{conj:strong}, where the bound $R$ only depends on the number $k$ of polynomials, but the Zariski open subset $U$ is allowed to depend on their maximum degree $m$:

\begin{conj}[weak]\label{conj:weak}
    Let $k \geq 2$. There is an integer $R(k)$ such that, for every $r \geq R(k)$ and every $m \geq 1$, there is a nonempty Zariski open subset $U_{k,r,m} \subseteq \KK[z]_{\leq r}$ with the following property:
    For every $\sigma \in U_{k,r,m}$ and all    $p_1, \ldots, p_k \in \KK[t]$ of degree at most $m$ that are nonconstant and pairwise distinct, the polynomials $\sigma(p_1), \ldots, \sigma(p_k)$ are linearly independent. 
\end{conj}

In fact, the origin-passing version of this weaker conjecture (see Conj. \ref{conj:weakOrigin}) is sufficient to resolve the problem of identifiability for polynomial MLPs, as we will explain in Section~\ref{sec:MLP}.

\subsection{Examples for $\sigma$}

We close this section with an explicit family of polynomials showing that the difficulty in Conjecture~\ref{conj:strong} is the openness statement rather than the existence of a single activation that works uniformly for all $p_i$.

\begin{exmp}
\label{ex:sigma_k_ex1}
Let $k\geq2$, take any $\ell > k(k-2)$, and let $f\in U_d$ be any polynomial from Proposition~\ref{prop:twoPols} for some $d\geq3$. Then $\sigma_{\ell,f}(z):=f(z)^{\ell}$ has the following property: For every choice of pairwise distinct nonconstant polynomials $p_1,\ldots,p_k$, the polynomials $\sigma_{\ell,f}(p_1),\ldots,\sigma_{\ell,f}(p_k)$ are linearly independent. Indeed, Proposition~\ref{prop:twoPols} implies that $f(p_1),\ldots,f(p_k)$ are pairwise nonproportional, and so (due to $\ell>k(k-2)$) the Newman--Slater theorem~\ref{thm:newman-slater} yields the claim.

For instance, both $f(z)=z^3+z+1$ and $f(z)=z^3+z^2+z$ belong to $U_3$: the corresponding values of $(\Delta_2,\Delta_3)$ are $(6,27)$ and $(4,-7)$, respectively. Thus, $(z^3+z+1)^{k(k-2)+1}$ and $(z^3+z^2+z)^{k(k-2)+1}$ are two explicit choices. Note that the second one is origin-passing.
\end{exmp}

In particular, the polynomial $\sigma_{\ell,f}$ belongs to $\KK[z]_{\leq r}$ for every $r\geq d \ell$. Thus, Example~\ref{ex:sigma_k_ex1} proves the existence part of Conjecture~\ref{conj:strong} after dropping the requirement that the good set be Zariski open.

\section{Origin-Passing Polynomials}
\label{sec:originPassing}

We now focus on the origin-passing analog of Conjecture~\ref{conj:strong} and its weaker version Conjecture~\ref{conj:weak}.
To this end, we denote by 
$\KK[t]^0:=\{p \in \KK[t]:p(0)=0\}$
and $\KK[t]^0_{\leq r}:=\KK[t]^0 \cap \KK[t]_{\leq r}$ the $\KK$-vector spaces of origin-passing polynomials (of degree at most $r$).

\begin{conj}[origin-passing]
\label{conj:goat_swapped}
Let $k \geq 2$. There is an integer $R(k)$ such that, for every  $r \geq R(k)$, there exists a nonempty Zariski open subset $U_{k,r} \subseteq \KK[z]^0_{\leq r}$ with the following property:
For every $\sigma \in U_{k,r}$ 
and every collection of nonzero, pairwise distinct
$p_1,\dots,p_k \in \KK[t]^0$, the polynomials 
$\sigma(p_1),\dots,\sigma(p_k)$ are linearly independent.
\end{conj}

\begin{conj}[weak, origin-passing]\label{conj:weakOrigin}
    Let $k \geq 2$. There is an integer $R(k)$ such that, for every $r \geq R(k)$ and every $m \geq 1$, there is a nonempty Zariski open subset $U_{k,r,m} \subseteq \KK[z]_{\leq r}^0$ with the following property:
    For every $\sigma \in U_{k,r,m}$ and all    $p_1, \ldots, p_k \in \KK[t]^0_{\leq m}$ that are nonzero and pairwise distinct, the polynomials $\sigma(p_1), \ldots, \sigma(p_k)$ are linearly independent. 
\end{conj}

We now prove analogues of the preceding results under the origin-passing assumption, following the same structure as in Section~\ref{sec:general}.

\subsection{Two polynomials}
\begin{prop}
\label{prop:twoPolsOrigin}
Suppose that $r \geq 2$, and let $\sigma(z)=a_1z+a_2z^2+\cdots+a_rz^r\in\KK[z]^0_{\leq r}$ be such that $a_r,a_{r-1}\neq 0$. Then, for every pair of nonzero distinct $p, q \in \KK[t]^0$, the polynomials
$\sigma(p)$ and $\sigma(q)$ are non-proportional. 
\end{prop}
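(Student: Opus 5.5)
I would follow the proof of Proposition~\ref{prop:twoPols} closely, exploiting that the origin-passing hypothesis kills the translation freedom that forced the use of $\Delta_2,\Delta_3$ there. Suppose $s\sigma(p)=\sigma(q)$ with $s\in\KK^*$ and $p,q\in\KK[t]^0$ nonzero and distinct. Since $p(0)=0$ and $p\neq 0$, $p$ is nonconstant, and likewise $q$. Because $a_r\neq 0$, comparing degrees gives $\deg p=\deg q=:\delta\geq 1$. Let $b\in\KK^*$ be the ratio of the leading coefficient of $q$ to that of $p$. Comparing leading coefficients then gives $s=b^r$.

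Next set $c:=q-bp$. I repeat the degree argument from Proposition~\ref{prop:twoPols} verbatim. If $c$ is nonconstant, then $q^r-b^rp^r=\sum_{j}\binom{r}{j}(bp)^{r-j}c^j-b^rp^r$ has degree exactly $(r-1)\delta+\deg c$, because the $j=1$ term $r(bp)^{r-1}c$ dominates. On the other hand, the lower-order parts of $\sigma(q)-b^r\sigma(p)$ have degree at most $(r-1)\delta$. This is a contradiction, so $c$ is constant. Since $p(0)=q(0)=0$, we get $c=q(0)-bp(0)=0$, so $q=bp$. This is where origin-passing pays off: no translation parameter survives.

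Now $\sigma(bp)=b^r\sigma(p)$ with $p$ nonconstant. Injectivity of $f\mapsto f(p)$ then gives $\sigma(bz)=b^r\sigma(z)$ in $\KK[z]$. Comparing coefficients of $z^{r-1}$ gives $a_{r-1}b^{r-1}=a_{r-1}b^r$. Since $a_{r-1}\neq0$ and $b\neq 0$, this forces $b=1$, hence $q=p$, contradicting distinctness. Note that $r\geq 2$ guarantees that $z^{r-1}$ is a genuine term, possibly the linear one, which is allowed in $\KK[z]^0$.

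I do not expect any real obstacle. The only care needed is the degree comparison in the second step, namely checking that the $j=1$ binomial term is not cancelled. It cannot be, because it is the unique term of top degree $(r-1)\delta+\deg c$: the terms with $j\geq 2$ have strictly smaller degree since $\deg c<\delta$.
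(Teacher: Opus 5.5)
Your proposal is correct and follows essentially the same route as the paper's proof. Both arguments get equal degrees and $s=b^r$ from leading coefficients, use the degree argument to force $c=q-bp$ to be constant and hence zero by the origin-passing condition, and then compare the $z^{r-1}$ coefficients of $\sigma(bz)=b^r\sigma(z)$ to conclude $b=1$.
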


\begin{proof}
Suppose that $s\sigma(p)=\sigma(q)$ for two nonzero polynomials $p,q \in  \KK[t]^0$ and $s \in \KK^*$. Note that $p$ and $q$ must have equal degree, which we denote by $\delta$. Let $b\in \KK^*$ be the ratio of the highest-degree coefficient of $q$ to that of $p$. By comparing the highest-degree coefficients of $s \sigma(p)$ and $\sigma(q)$, we deduce that $s=b^r$.

We wish to show that $q=bp$. To this end, define $c:=q-bp\in\KK[t]_{\leq \delta}^0$, and assume for contradiction that $\deg c \geq 1$. Then, $q^r-(bp)^r
$ has degree $(r-1)\delta + \deg c$, while any other term in $\sigma(q) - s \sigma(p)$ coming from lower powers $p^i, q^i$, with $i < r$, has degree at most $(r-1)\delta$.Therefore, the term $q^r-(bp)^r$ cannot be canceled out, leading to a contradiction with the equality $s\sigma(p)=\sigma(q)$. 

Thus, $c$ is a constant, and in fact must be $0$ since $p$ and $q$ are origin-passing.
We have shown that $s\sigma(p)=\sigma(bp)$. Since $p$ is nonconstant, the substitution map $\KK[z]\to\KK[t]$, $f\mapsto f(p)$, is injective. Hence, $s\sigma(z)=\sigma(bz)$. By comparing the terms of degree $r-1$, we conclude that $s=b^r=b^{r-1}$, implying that $b=1$, i.e., $q = p$.  
\end{proof}

\subsection{Polynomials of equal degree}

\begin{thm}\label{thm:equalDegreeOrigin}
Let $m,k\geq 1$ and assume $r\geq k^2(m+1)$. Then there exists a nonempty Zariski open subset $U_{k,m,r}\subseteq\KK[z]^0_{\leq r}$ such that, for every $\sigma\in U_{k,m,r}$ and every $k$-tuple of pairwise distinct polynomials $p_1,\ldots,p_k\in\KK[t]^0$ of degree $m$, the polynomials $\sigma(p_1),\ldots,\sigma(p_k)$ are linearly independent.
\end{thm}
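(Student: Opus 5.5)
The plan is to mirror the proof of Theorem~\ref{thm:main-nonmonic}, replacing the ambient spaces by their origin-passing counterparts and adjusting the dimension count. First, restrict the linear map of Lemma~\ref{lem:filtered-rank-bound} to origin-passing activations:
$$
T^0_{p,\lambda}:\KK[z]^0_{\leq r}\longrightarrow \KK[t]_{\leq rm},\qquad \sigma\longmapsto\sum_{i=1}^k\lambda_i\sigma(p_i(t)).
$$
The space $\KK[z]^0_{\leq r}$ has basis $z,z^2,\dots,z^r$, so it has dimension $r$. The proof of Lemma~\ref{lem:filtered-rank-bound} only used that the $p_i$ are pairwise distinct of degree $m$, and it produces exponents $h$ with $a_{\nu_0}(h)\neq 0$. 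It shows that every block of $k$ consecutive integers contains such an $h$. Applying this to the blocks inside $\{1,\dots,r\}$ gives at least $\lfloor r/k\rfloor$ exponents $h\in\{1,\dots,r\}$ with $a_{\nu_0}(h)\neq 0$. The images $T^0_{p,\lambda}(z^h)=Q_h$ for these $h$ have pairwise distinct degrees $mh-\nu_0$, so they are linearly independent. Hence $\rank T^0_{p,\lambda}\geq\lfloor r/k\rfloor$ and $\dim\ker T^0_{p,\lambda}\leq r-\lfloor r/k\rfloor$.

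Next, set up the incidence variety. Let $\Pcal^0_m\subseteq\KK[t]^0_{\leq m}$ be the open set of origin-passing polynomials of degree exactly $m$; it has dimension $m$. Let $\Pcal^0_{k,m}\subseteq(\Pcal^0_m)^k$ be the open set of pairwise distinct $k$-tuples, of dimension $km$. Define
$$
\Ical^0_{k,m,r}:=\Bigl\{([\sigma],p,[\lambda])\in\PP(\KK[z]^0_{\leq r})\times\Pcal^0_{k,m}\times\PP^{k-1} : \textstyle\sum_i\lambda_i\sigma(p_i)=0\Bigr\}.
$$
The fibre of the projection to $\Pcal^0_{k,m}\times\PP^{k-1}$ over $(p,\lambda)$ is $\PP(\ker T^0_{p,\lambda})$. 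By the rank bound above, this fibre has dimension at most $r-1-L$, where $L:=\lfloor r/k\rfloor$. Therefore
$$
\dim\Ical^0_{k,m,r}\leq km+(k-1)+r-1-L.
$$
The hypothesis $r\geq k^2(m+1)$ gives $L\geq k(m+1)=km+k$. Substituting, $\dim\Ical^0_{k,m,r}\leq r-2$, which is strictly less than $\dim\PP(\KK[z]^0_{\leq r})=r-1$. So the closure of the projection of $\Ical^0_{k,m,r}$ to $\PP(\KK[z]^0_{\leq r})$ is a proper closed subset $C$. Its affine cone $\widehat C$ is a proper closed subset of $\KK[z]^0_{\leq r}$, and $U_{k,m,r}:=\KK[z]^0_{\leq r}\setminus\widehat C$ is nonempty, Zariski open, and has the required property by construction.

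The main obstacle is the rank estimate for the restricted map, since $z^0$ is no longer available. One must check that the recurrence argument still yields nonvanishing indices inside $\{1,\dots,r\}$ rather than $\{0,\dots,r\}$. This holds because the no-$k$-consecutive-zeros property applies to any block of $k$ consecutive integers, including those starting at $1$. Note also that $\nu_0\geq 0$ and $h\geq 1$ keep the degrees $mh-\nu_0$ distinct, so no cancellation issue arises in the equal-degree case. The remaining difficulty is the bookkeeping: both the parameter space of the $p_i$ and the space of $\sigma$ lose one dimension per polynomial, and with the stated bound this still leaves one dimension to spare.
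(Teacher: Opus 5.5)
Your proposal is correct and follows the paper's proof essentially verbatim. You restrict $T_{p,\lambda}$ to $\KK[z]^0_{\leq r}$, rerun the recurrence argument of Lemma~\ref{lem:filtered-rank-bound} on $h=1,\dots,r$ to get $\rank T^0_{p,\lambda}\geq\lfloor r/k\rfloor$, and then bound $\dim\Ical^0_{k,m,r}\leq km+(k-1)+(r-L-1)\leq r-2<r-1$; taking the codomain to be $\KK[t]_{\leq rm}$ rather than $\KK[t]^0_{\leq rm}$ changes nothing.
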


\begin{proof}
Let $\Pcal_m^0\subseteq\KK[t]_{\leq m}^0$ be the open subset of origin-passing polynomials of degree equal to $m$, and let $\Pcal_{k,m}^0\subseteq(\Pcal_m^0)^k$ be the set of pairwise distinct $k$-tuples. Note that $\dim\Pcal_{k,m}^0=km$. For $p\in\Pcal_{k,m}^0$ and $\lambda\in\KK^{k} \setminus \{ 0 \}$, restrict the map in \eqref{eq:T-same-degree} to $T^0_{p,\lambda}:\KK[z]^0_{\leq r}\to\KK[t]^0_{\leq rm}$. The proof of Lemma~\ref{lem:filtered-rank-bound}, applied to $h=1,\ldots,r$, gives $\rank T^0_{p,\lambda}\geq L:=\lfloor r/k\rfloor$: the sequence $a_{\nu_0}(h)$ used there cannot vanish at $k$ consecutive values, and the corresponding $T^0_{p,\lambda}(z^h)$ have distinct degrees.

We define the incidence variety $\Ical^0_{k,m,r}$ as in \eqref{eq:incidenceVar}, 
with $\Pcal_{k,m}$ replaced by $\Pcal_{k,m}^0$ and $\PP(\KK[z]_{\leq r})$ by $\PP(\KK[z]^0_{\leq r})\cong\PP^{r-1}$. 
The same argument as in the proof of Theorem~\ref{thm:main-nonmonic} gives
$$
\dim\Ical^0_{k,m,r}\leq km+(k-1)+(r-L-1)=r+km+k-L-2.
$$
Since $r\geq k^2(m+1)$, we have $L\geq k(m+1)$, hence $\dim\Ical^0_{k,m,r}\leq r-2$.Therefore, the closure of its image under the projection onto the first factor is a proper subset of $\PP(\KK[z]^0_{\leq r})\cong\PP^{r-1}$, and the complement of the corresponding affine cone is the desired open subset $U_{k,m,r}$.
\end{proof}

\subsection{Polynomials of bounded degree}

  \begin{thm}
\label{thm:boundedDegreeOrigin}
Let $m,k\geq 1$ and assume
$$
r\geq k \left( k(m+1) + \frac{(m-1)(k-1)}{2} \right) = \frac{k}{2}(3km+k-m+1).
$$
Then, there exists a nonempty Zariski open subset $U_{k,m,r}\subseteq\KK[z]^0_{\leq r}$ such that, for every $\sigma\in U_{k,m,r}$ and every $k$-tuple of nonzero, pairwise distinct polynomials $p_1,\ldots,p_k\in\KK[t]^0_{\leq m}$, the polynomials $\sigma(p_1),\ldots,\sigma(p_k)$ are linearly independent.
\end{thm}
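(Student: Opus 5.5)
We follow the scheme of Theorem~\ref{thm:variable-degree-main}, with the origin-passing modifications used in Theorem~\ref{thm:equalDegreeOrigin}. Let $\Pcal^{0}_{\leq m}\subseteq\KK[t]^0_{\leq m}$ be the open set of nonzero origin-passing polynomials, and let $\Pcal^0_{k,\leq m}\subseteq(\Pcal^0_{\leq m})^k$ be the open set of pairwise distinct $k$-tuples. This has dimension $km$. For $p\in\Pcal^0_{k,\leq m}$ and $\lambda\neq 0$, consider the restriction $T^0_{p,\lambda}:\KK[z]^0_{\leq r}\to\KK[t]^0_{\leq rm}$ of the map \eqref{eq:T-variable-degree}. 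The plan is to prove the rank bound
$$
\rank T^0_{p,\lambda}\geq L:=\left\lfloor \frac{r}{k}-\frac{(m-1)(k-1)}{2}\right\rfloor,
$$
which is an origin-passing analogue of Lemma~\ref{lem:variable-degree-rank-bound}.

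To obtain this bound, we rerun the proof of Lemma~\ref{lem:variable-degree-rank-bound}. Take $S$, $D$, $I$, $s$ and $u_i(q)$ as there, and define $F_h$ and $\nu_0$ in the same way. The improvement from $m$ to $m-1$ comes from the origin-passing hypothesis. Since $p_i(0)=0$, we have $p_i=t^D u_i(q)$ with $u_i(q)$ of $q$-degree at most $D-1$; the coefficient of $q^D$ is $p_i(0)=0$. Hence each difference $u_j-u_i$ is a nonzero polynomial in $q$ of degree at most $D-1$, so its $q$-order is at most $D-1$. Therefore
$$
\nu_0\leq D_I\leq (D-1)\binom{s}{2}\leq (m-1)\binom{k}{2}.
$$
The Vandermonde and adjugate argument bounding $\nu_0$ by $D_I$ goes through verbatim.

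The recurrence argument also carries over. It shows that $a_{\nu_0}(h)$ has no $s$ consecutive zeros. Counting indices $h$ in $\{\nu_0+1,\dots,r\}$ gives at least
$$
\left\lfloor\frac{r-\nu_0}{s}\right\rfloor\geq \frac{r}{k}-\frac{(m-1)(k-1)}{2}
$$
such values. All of these satisfy $h\geq 1$, so each $z^h$ lies in $\KK[z]^0_{\leq r}$. As before, for each such $h$ the polynomial $T^0_{p,\lambda}(z^h)$ has degree exactly $Dh-\nu_0$, because the indices in $S\setminus I$ contribute only in degree at most $(D-1)h<Dh-\nu_0$. These degrees are pairwise distinct, which gives the rank bound.

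For the incidence step, we form $\Ical^0_{k,m,r}\subseteq\PP(\KK[z]^0_{\leq r})\times\Pcal^0_{k,\leq m}\times\PP^{k-1}$ as in \eqref{eq:incidenceBounded}. Its fibres over $(p,\lambda)$ are $\PP(\ker T^0_{p,\lambda})$, of dimension at most $r-1-L$, so
$$
\dim\Ical^0_{k,m,r}\leq km+(k-1)+r-1-L.
$$
The hypothesis on $r$ gives
$$
\frac{r}{k}-\frac{(m-1)(k-1)}{2}\geq k(m+1),
$$
hence $L\geq km+k$ and $\dim\Ical^0_{k,m,r}\leq r-2$. The closure of the projection of $\Ical^0_{k,m,r}$ to $\PP(\KK[z]^0_{\leq r})\cong\PP^{r-1}$ is therefore a proper closed subset. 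The complement of its affine cone is the required $U_{k,m,r}$.

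The main point requiring care is the sharpened bound $\operatorname{ord}_q(u_j-u_i)\leq D-1$. This is exactly where the origin-passing hypothesis enters, and it accounts for the factor $(m-1)$ in the threshold. We also need all witnessing exponents $h$ to be at least $1$, which holds automatically because $h>\nu_0\geq 0$.
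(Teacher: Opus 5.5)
Your proposal is correct and follows the paper's proof essentially step for step. It uses the sharpened bound $\nu_0\leq (D-1)\binom{s}{2}\leq (m-1)\binom{k}{2}$ coming from $\deg u_i\leq D-1$, the count over $h=\nu_0+1,\dots,r$ giving $\rank T^0_{p,\lambda}\geq L$, and the incidence estimate $\dim\Ical^0_{k,m,r}\leq r-2<r-1=\dim\PP(\KK[z]^0_{\leq r})$.

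One small slip: as displayed, $\lfloor (r-\nu_0)/s\rfloor\geq r/k-(m-1)(k-1)/2$ can fail when the right side is not an integer. What actually holds is the unfloored inequality $(r-\nu_0)/s\geq r/k-(m-1)(k-1)/2$, and taking floors of both sides gives exactly the needed $\lfloor (r-\nu_0)/s\rfloor\geq L$.
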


\begin{proof}
Let $\Pcal_{k,\leq m}^{0,+}$ be the open subset of $(\KK[t]^0_{\leq m}\setminus\{0\})^k$ consisting of pairwise distinct $k$-tuples. Note that $\dim\Pcal_{k,\leq m}^{0,+}=km$. For $p\in\Pcal_{k,\leq m}^{0,+}$ and $\lambda\in\KK^{k}, \lambda \neq 0$, restrict the map in \eqref{eq:T-variable-degree} to $T^0_{p,\lambda}:\KK[z]^0_{\leq r}\to\KK[t]^0_{\leq rm}$.

In the following, we use the notation of the proof of Lemma~\ref{lem:variable-degree-rank-bound}. In particular, $D$ is the maximal degree among the $p_i$ with $\lambda_i\neq0$, and we write $p_i(t)=t^Du_i(t^{-1})$ for the $p_i$ of degree $D$. Since $p_i(0)=0$, this time we have that $\deg u_i\leq D-1$. Thus, \eqref{eq:DI-bound} improves to $\nu_0\leq(D-1)\binom{s}{2}\leq(m-1)\binom{k}{2}$. Applying the same recurrence argument to $h=\nu_0+1,\ldots,r$ gives at least $\lfloor(r-\nu_0)/s\rfloor$ linearly independent images. Since $s\leq k$ and $D\leq m$, we obtain
$$
\rank T^0_{p,\lambda}\geq L:=\left\lfloor\frac{r}{k}-\frac{(m-1)(k-1)}{2}\right\rfloor.
$$

We consider the incidence variety $\Ical^{0,\leq}_{k,m,r}$ analogous to the one in \eqref{eq:incidenceBounded}, but with the parameter spaces $\PP(\KK[z]^0_{\leq r})$ and $\Pcal_{k,\leq m}^{0,+}$.
The incidence argument in the proof of Theorem~\ref{thm:variable-degree-main} then yields
$$
\dim\Ical^{0,\leq}_{k,m,r}\leq km+(k-1)+(r-L-1)=r+km+k-L-2.
$$
The numerical assumption on $r$ gives $L\geq k(m+1)$, hence $\dim\Ical^{0,\leq}_{k,m,r}\leq r-2$.Therefore, the closure of the image of $\Ical^{0,\leq}_{k,m,r}$ under the projection onto the first factor is a proper subset of $\PP(\KK[z]^0_{\leq r}) \cong \PP^{r-1}$, and the complement of the corresponding affine cone is the required open subset $U_{k,m,r}$.
\end{proof}

\subsection{Examples for $\sigma$} \label{sec:exAct}

The origin-passing activations $(z^3+z^2+z)^{\ell}$ with $\ell > k(k-2)$ from Example~\ref{ex:sigma_k_ex1} already provide the existence of good activations for the two conjectures of this section.
However, we can easily construct more such examples:

\begin{exmp}
    \label{ex:actOrign1}
    Following the construction of Example~\ref{ex:sigma_k_ex1}, let $k \geq 2$, $\ell > k(k-2)$, $d \geq 2$, and $f=a_1z+\ldots+a_dz^d$ with $a_{d-1}a_d\neq0$. Combining the Newman--Slater theorem~\ref{thm:newman-slater} with Proposition~\ref{prop:twoPolsOrigin} shows that $\sigma_{\ell,f} := f^\ell$ is good, that is, for any pairwise distinct, nonzero, origin-passing polynomials $p_1, \ldots, p_k$, the polynomials $\sigma_{\ell,f}(p_1), \ldots, \sigma_{\ell,f}(p_k)$ are linearly independent. The activation of minimal degree obtained from this construction is $(z^2+z)^{k(k-2)+1}$.
\end{exmp}

\begin{exmp}
    \label{ex:actOrign2} 
    Another example is $\sigma_r := (z+1)^r-1$, which is good in the above sense whenever $r \geq k^2-1$.
    Indeed, let $p_1, \ldots, p_k$ be pairwise distinct, nonzero, origin-passing polynomials, and suppose that $\sum_{i=1}^k \lambda_i \sigma_r(p_i)=0$ for some scalars $\lambda_i$. Setting $q_i:=1+p_i$, this becomes
\begin{align}\label{eq:vahid}
    -\left(\sum_{i=1}^k \lambda_i\right)1^r+\sum_{i=1}^k \lambda_i q_i^r=0.
\end{align}
Since $p_i(0)=0$, all $q_i$ have constant term $1$. Therefore, the polynomials
$1,q_1,\dots,q_k$ are pairwise nonproportional. The Newman--Slater theorem~\ref{thm:newman-slater} shows that the $k+1$ polynomials
$
1^r,q_1^r,\dots,q_k^r
$
are linearly independent, so \eqref{eq:vahid} implies $\lambda_1=\cdots=\lambda_k=0$.

\end{exmp}

\section{Identifiability of Polynomial MLPs} \label{sec:MLP}

In this section, we explain how Conjecture~\ref{conj:weakOrigin} resolves identifiability for deep multilayer perceptrons with sufficiently general polynomial activations. We first formally define such networks. Fix a function $\sigma \colon \RR \rightarrow \RR$, a sequence of $L > 1$ positive integers $d_0, \ldots, d_L$, and, for every $i=1, \ldots, L$, a matrix $W_i \in \RR^{d_i \times d_{i-1}}$. 
\begin{defn} \label{def:mlp}
A \emph{multilayer perceptron} (MLP) with architecture $\mathbf{d} = (d_0, \ldots, d_L)$, activation function $\sigma$ and weights $\mathbf{W} = (W_1, \ldots, W_{L})$ is the function $f_\mathbf{W} \colon \RR^{d_0} \rightarrow \RR^{d_L}$ given by the composition:
\begin{equation}\label{eq:mlpdef}
f_\mathbf{W} = W_{L} \circ \sigma \circ \cdots \circ \sigma \circ W_1,
\end{equation}
where $\sigma$ is applied coordinate-wise.
\end{defn}
We denote by $\mathcal{W} := \bigoplus_{i=1}^L \RR^{d_i \times d_{i-1}}$ the parameter space of an MLP, and by $\varphi \colon \mathcal{W} \ni \mathbf{W} \mapsto f_\mathbf{W}$ its parametrization map. Next, we introduce the notion of a subnetwork. 
\begin{defn}\label{defn:subnet}
Let $0<j<L$ and $1\leq i\leq d_j$. The $i$-th neuron in layer $j$ is called
\begin{itemize}
  \item \emph{inactive} if its incoming or outgoing weights vanish, i.e., $W_j[i,:]=0$ or $W_{j+1}[:,i]=0$;
  \item \emph{redundant} if $W_j[i,:]$ is nonzero but equal to $W_j[k,:]$ for some $1\leq k\leq d_j$ with $k\neq i$.
\end{itemize}
We call $\mathbf{W}$ a \emph{subnetwork} if it contains at least one inactive or redundant neuron. 
\end{defn}
\begin{rmk} \label{rem:subnet}
    If $\mathbf{W}$ is a subnetwork, then $f_\mathbf{W}$ can be represented by an MLP with a smaller architecture. Indeed, if the $i$-th column of $W_{j+1}$ vanishes, it can be removed together with the $i$-th row of $W_j$, obtaining weights for the architecture $(d_0,\ldots,d_{j-1},d_j-1,d_{j+1},\ldots,d_L)$ whose associated function coincides with $f_\mathbf{W}$. Assuming $\sigma(0)=0$, the same holds when the $i$-th row of $W_j$ vanishes. Similarly, when the $i$-th and the $k$-th rows of $W_j$ coincide, one can remove the $i$-th row of $W_j$ and add the $i$-th column of $W_{j+1}$ to its $k$-th column.
\end{rmk}

\begin{defn}\label{defn:identifiability}
The weights $\mathbf{W}\in\mathcal{W}$ are \emph{identifiable} if, for every $\mathbf{V}\in\mathcal{W}$ with $f_\mathbf{W} = f_{\mathbf{V}}$, there exist permutation matrices $P_1 \in \RR^{d_1 \times d_1}, \ldots, P_{L-1} \in \RR^{d_{L-1} \times d_{L-1}}$ such that 
\begin{equation}\label{eq:permiden}
    V_i =P_i W_i P_{i-1}^\top
\end{equation}
for every $i=1,\ldots,L$, where $P_0$ and $P_L$ are identity matrices. 
\end{defn}
In other words, weights are identifiable if all other weights in the same fiber under $\varphi$ are obtained by permuting the neurons in each layer.

\begin{cor}[of Conjecture~\ref{conj:weakOrigin}]\label{thm:deeppoly}
Fix an architecture $\mathbf{d} = (d_0, \ldots, d_L)$.
Let $D:=\max\{d_1,\ldots,d_{L-1}\}$ and, using the notation in Conjecture~\ref{conj:weakOrigin}, let $r \geq R(2D)$. There is a nonempty Zariski open subset $U\subseteq \RR[z]^0_{\leq r}$ with the following property: 
For every MLP of architecture $\mathbf{d}$ with activation $\sigma \in U$,
the weights $\mathbf{W}\in\mathcal{W}$ are identifiable if and only if they are not a subnetwork. 
\end{cor}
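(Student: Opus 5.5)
The approach is induction on the number of layers $L$, using Conjecture~\ref{conj:weakOrigin} (over $\KK=\RR$) to match hidden neurons of two parametrizations with the same end-to-end function. First fix the degree bound. Every coordinate of the pre-activation at layer $j$ is an origin-passing polynomial in $x\in\RR^{d_0}$ of degree at most $r^{j-1}$. So all pre-activations entering the last activation have degree at most $m:=r^{L-2}$. Set
$$
U:=\bigcap_{k\le 2D}U_{k,r,m},
$$
a finite intersection of nonempty Zariski open sets, hence nonempty and open. By the reduction to one variable (restriction to a general line preserves degrees, origin-passing, and the pairwise distinctness of finitely many polynomials), for $\sigma\in U$ and any at most $2D$ nonzero, pairwise distinct, origin-passing multivariate polynomials $q_i$ of degree at most $m$, the $\sigma(q_i)$ are linearly independent. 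It is harmless to additionally require $\sigma\neq0$.

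\emph{"Only if".} Suppose $\mathbf W$ is a subnetwork and exhibit a non-permutation-equivalent $\mathbf V$ with $f_\mathbf V=f_\mathbf W$.
\begin{itemize}
\item If neuron $i$ in layer $j$ has zero incoming row, then since $\sigma(0)=0$ its output is zero. Its outgoing column $W_{j+1}[:,i]$ can be changed arbitrarily. Choose the new column so that the zero pattern of the weights changes; this cannot be undone by permutations.
\item If its outgoing column is zero, change the incoming row arbitrarily to a nonzero value in the same way.
\item If rows $i$ and $k$ of $W_j$ coincide, transfer part of column $i$ of $W_{j+1}$ to column $k$.
\end{itemize}
In each case the multiset of weight entries changes in a way that no permutation matrices $P_i$ in \eqref{eq:permiden} can produce. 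Small bookkeeping is needed when several of these degeneracies occur at once, but one modification always suffices.

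\emph{"If".} Prove, by induction on $L$, the following two statements together for non-subnetwork $\mathbf W$:
\begin{enumerate}[label=(\alph*)]
\item the pre-activations $g^{(j)}_1,\dots,g^{(j)}_{d_j}$ at every hidden layer are nonzero and pairwise distinct;
\item any $\mathbf V$ with $f_\mathbf V=f_\mathbf W$ is permutation-related to $\mathbf W$.
\end{enumerate}

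For (a), the layer-$1$ pre-activations are the rows of $W_1$, which are nonzero and pairwise distinct by the non-subnetwork hypothesis. If $\sigma(g^{(j-1)})$ consists of linearly independent polynomials (by the choice of $U$), then $g^{(j)}=W_j\,\sigma(g^{(j-1)})$ has nonzero, pairwise distinct entries, because the rows of $W_j$ are nonzero and pairwise distinct.

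For (b), write
$$
f_\mathbf W=W_L\,\sigma(g), \qquad f_\mathbf V=V_L\,\sigma(h),
$$
where $g=g^{(L-1)}$ and $h$ are the last pre-activations of $\mathbf W$ and $\mathbf V$. Subtracting gives, for each output coordinate, a linear relation among at most $2D$ polynomials $\sigma(g_i)$ and $\sigma(h_j)$.
\begin{itemize}
\item Discard the $h_j$ equal to $0$, since they contribute $\sigma(0)=0$.
\item Group equal polynomials, summing their coefficients.
\item Apply linear independence to the distinct nonzero ones.
\end{itemize}
Each column of $W_L$ is nonzero and the $g_i$ are distinct. Hence every $g_i$ must coincide with some $h_j$, and the grouped $V_L$-coefficient equals the corresponding column of $W_L$. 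Since $\mathbf V$ has only $d_{L-1}$ neurons in that layer and there are $d_{L-1}$ distinct $g_i$, the map $i\mapsto j$ is a bijection. Consequently no $h_j$ is zero or repeated, $h=P_{L-1}g$ for a permutation matrix $P_{L-1}$, and $V_L=W_LP_{L-1}^\top$.

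Then $h=P_{L-1}g$ says that the truncated networks $(V_1,\dots,V_{L-1})$ and $(P_{L-1}W_{L-1},W_{L-2},\dots,W_1)$ have the same function. The latter is again not a subnetwork, with architecture $(d_0,\dots,d_{L-1})$ and degree bounds covered by the same $U$. The induction hypothesis then gives the remaining $P_i$. The base case $L=1$ is trivial, since $f_\mathbf W=W_1$.

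The main obstacle I expect is the matching step. One must handle a competitor $\mathbf V$ that is itself degenerate (zero or repeated $h_j$) and make sure that coefficient cancellations after grouping cannot hide a $g_i$. The counting argument above, $d_{L-1}$ distinct $g_i$ forced into $d_{L-1}$ slots, is what I would rely on. A second point to check is that the total number of polynomials in each relation is at most $2D$, and that their degrees stay below the fixed $m$, so that a single open set $U$ works at every layer.
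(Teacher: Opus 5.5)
Your proof is correct and follows the paper's skeleton. You induct on depth, match the penultimate pre-activations of $\mathbf W$ and $\mathbf V$ by applying the conjecture to at most $2D$ polynomials (after discarding zeros and grouping), count to get a bijection and $V_L=W_LP_{L-1}^\top$, and then peel off the last layer. Where you differ is in showing that the penultimate pre-activations are nonzero and pairwise distinct.

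\textbf{Paper's route.} It derives these facts from the inductive hypothesis. It prunes inactive neurons from the depth-$(L-1)$ networks with last rows $W_{L-1}[j,:]$ and $W_{L-1}[j,:]-W_{L-1}[k,:]$ until they are no longer subnetworks. It then uses that identifiable weights cannot parametrize $0$. This is why it must also intersect with the good sets $U'$ of all smaller architectures.

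\textbf{Your route.} You push linear independence forward layer by layer. The hidden pre-activations are origin-passing, have degree at most $r^{L-2}=m$, and number at most $D$ per layer. So $\sigma(g^{(j-1)})$ is linearly independent, and distinct nonzero rows of $W_j$ give distinct nonzero entries of $g^{(j)}$.

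Both routes rest on the same separation property at every layer, but yours uses it directly. It needs no pruning procedure. It also shows that the single set $U_{2D,r,m}$ works at every stage of the induction, with no extra intersection $U'$.

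Two minor repairs.
\begin{itemize}
\item Your intersection of $U_{k,r,m}$ over $k\le 2D$ presupposes $r\ge R(k)$ for each such $k$, but only $r\ge R(2D)$ is given. Use $U_{2D,r,m}$ alone and pad smaller collections to $2D$ nonzero, pairwise distinct, origin-passing polynomials (e.g.\ suitable multiples $ct$), as the paper does. This also covers $k=1$ and automatically excludes $\sigma=0$.
\item In the ``only if'' part, the zero pattern itself can be moved by permutations. The right invariant is the multiset of entries of each single $W_i$ (for instance its number of zeros or its Frobenius norm), which $P_iW_iP_{i-1}^\top$ preserves. So changing one matrix in this quantity already rules out permutation equivalence, and no bookkeeping over simultaneous degeneracies is needed. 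The paper only cites Remark~\ref{rem:subnet} for this direction, so your explicit construction is a useful addition.
\end{itemize}
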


\begin{proof}
It is clear from Remark~\ref{rem:subnet} that subnetworks are not identifiable. For the converse, we argue by induction on the depth $L$. For $L=1$, the statement is straightforward. Thus, we assume $L>1$. 
Note that a statement of Conjecture~\ref{conj:weakOrigin} for $2D$ polynomials also applies to any smaller collection with the same degree bound $R(2D)$, since one can complete it to $2D$ nonzero pairwise distinct origin-passing polynomials. 
For every architecture $(e_0, \ldots, e_{L-1})$ satisfying $1 \leq e_i \leq d_i$, the induction hypothesis provides a nonempty Zariski open subset of good activations in $\RR[z]^0_{\leq r}$.
We let $U' \subseteq \RR[z]^0_{\leq r}$ be the intersection of those subsets. 
Moreover, setting $m:=r^{L-2}$ to be the maximum degree of the polynomials output by the network after $L-1$ layers, we consider the nonempty Zariski open subset $U_{2D,r,m}$ from Conjecture~\ref{conj:weakOrigin}.
We then define the nonempty Zariski open subset $U:=U'\cap U_{2D,r,m}$ of $\RR[z]^0_{\leq r}$, and let $\sigma \in U$.

We consider weights $\mathbf{W}\in\mathcal{W}$ for the architecture $\mathbf{d}$ that are not a subnetwork. 
Denote by $p_j$ the output of the 
$j$-th neuron in the penultimate layer of $f_\mathbf{W}$ (before applying $\sigma$).
In other words, $p_j$ is the MLP with architecture $\mathbf{d}' := (d_0, \ldots, d_{L-2},1)$ and weights $\mathbf{W}'_j := (W_1,\ldots,W_{L-2},W_{L-1}[j,:])$.

We claim that the $p_j$ are nonzero polynomials. This follows from the inductive hypothesis as follows:
If the weights $\mathbf{W}'_j$ are not a subnetwork for the architecture $\mathbf{d}'$, then they are identifiable (by the inductive hypothesis) and thus $p_j$ cannot be zero (since otherwise it would be parametrized both by the nonzero weights $\mathbf{W}'_j$ and by the all-zero weights).
However, even though $\mathbf{W}$ is not a subnetwork, the weights $\mathbf{W}'_j$ might be, since some entries of $W_{L-1}[j,:]$ (but not all!) can be zero, resulting in inactive neurons at the $(L-2)$-th layer. As explained in Remark~\ref{rem:subnet}, these neurons can be removed. This operation may generate inactive neurons with vanishing outgoing weights at earlier layers, which can be removed iteratively. In this way, we eventually obtain a smaller MLP with non-subnetwork weights parametrizing $p_j$. 
Now, we can apply the inductive hypothesis and conclude that these weights are identifiable for the smaller MLP. Hence, as above, we see that $p_j$ cannot be zero.

Next, we claim that the $p_j$ are pairwise distinct. If $p_j=p_k$ for some $j\neq k$, consider the MLP with architecture $\mathbf{d}'$ and weights $(W_1,\ldots,W_{L-2},W_{L-1}[j,:]-W_{L-1}[k,:])$, which parametrizes the zero polynomial. Since $W_{L-1}[j,:]\neq W_{L-1}[k,:]$, the same reduction procedure as above yields a smaller MLP with non-subnetwork weights parametrizing zero, again contradicting that these weights must be identifiable by the inductive hypothesis.

Now, consider weights $\mathbf{V}\in\mathcal{W}$ such that $f_\mathbf{W}=f_\mathbf{V}$. Let $q_k$ be the output of the $k$-th neuron in the penultimate layer of $f_\mathbf{V}$ (before applying $\sigma$). Then, for every $i=1,\ldots,d_L$,
\begin{equation}\label{eq:mlpident}
    \sum_{j=1}^{d_{L-1}} W_L[i,j]\,\sigma(p_j)-\sum_{k=1}^{d_{L-1}} V_L[i,k]\,\sigma(q_k)=0.
\end{equation}
Fix $j \in \{1, \ldots, d_{L-1}\}$.
Since $W_L$ has no vanishing columns, there exists $i$ such that $W_L[i,j]\neq0$. For such an $i$, apply Conjecture~\ref{conj:weakOrigin} to \eqref{eq:mlpident}, after removing zero terms and grouping equal polynomials $q_\bullet$ and $p_\bullet$. It follows that $p_j=q_k$ for some $k$. 

Since the $p_j$ are pairwise distinct as $j$ varies, the resulting map $j\mapsto k$ is injective, hence bijective. Let $P_{L-1}$ be the corresponding permutation matrix. The linear independence in \eqref{eq:mlpident}, guaranteed by Conjecture~\ref{conj:weakOrigin}, then gives $V_L=W_LP_{L-1}^\top$.
Moreover, the depth $L-1$ networks with weights $(W_1,\ldots,W_{L-2},P_{L-1}W_{L-1})$ and $(V_1,\ldots,V_{L-1})$ parametrize the same function. The first network is not a subnetwork, so the inductive hypothesis yields permutation matrices $P_1,\ldots,P_{L-2}$ such that $V_i=P_iW_iP_{i-1}^\top$ for $i=1,\ldots,L-2$ (with $P_0=I$) and $V_{L-1}=P_{L-1}W_{L-1}P_{L-2}^\top$. This proves that $\mathbf{W}$ is identifiable.
\end{proof}

\begin{rmk}
    \label{rem:provenIdentifiability}
Even though Conjecture~\ref{conj:weakOrigin} is open in general, its established cases from Section~\ref{sec:originPassing} imply, via the same proof, the following specific identifiability results:
\begin{itemize}
    \item \textit{Width-one networks:} Proposition~\ref{prop:twoPolsOrigin} gives the conclusion of Corollary~\ref{thm:deeppoly} when all hidden widths are one, that is, $d_i=1$ for $1\leq i\leq L-1$.
    \item \textit{Layerwise activations:} Theorem~\ref{thm:boundedDegreeOrigin} gives the analogue of Corollary~\ref{thm:deeppoly} for layer-specific origin-passing activations. More precisely, consider
    $$
    f_\mathbf{W}=W_L\circ\sigma_{L-1}\circ\cdots\circ\sigma_1\circ W_1,
    $$
    where $\deg\sigma_i=r_i$. Set $m_0:=1$ and $m_i:=r_1\cdots r_i$. The outputs from the previous layer entering $\sigma_i$ have degree at most $m_{i-1}$. Hence, it is enough to choose $r_i$ recursively so that
    $$
    r_i \geq d_i (6d_im_{i-1}+2d_i-m_{i-1}+1),
    $$
    and to choose $\sigma_i$ in the nonempty Zariski open subset $U_{2d_i,m_{i-1},r_i}$ given by Theorem~\ref{thm:boundedDegreeOrigin}.
    \item \textit{Shallow networks:} As a particular case, Corollary~\ref{thm:deeppoly} holds unconditionally for $L=2$. Here, $m=1$ and $k=2d_1$, so Theorem~\ref{thm:boundedDegreeOrigin} gives the sufficient bound $r\geq 7d_1^2$.
    \item \textit{Specific choices of activations:} Furthermore, for deep MLPs with any of the specific activations from Section~\ref{sec:exAct} (with $k = 2D$), the weights are also identifiable if and only if they are not a subnetwork.
\end{itemize}
\end{rmk}

\begin{rmk}
    The results in this section concern MLPs without bias vectors. To deal with MLPs that have bias vectors, one generalizes Definition~\ref{def:mlp} by allowing affine linear maps instead of the linear maps $W_i$. As discussed above, the identifiability of MLPs with bias vectors would follow from a strengthening of Conjectures~\ref{conj:strong} and~\ref{conj:weak} that asks for the $k+1$ polynomials $1, \sigma(p_1), \ldots, \sigma(p_k)$ to be linearly independent. 
\end{rmk}

\section{Conclusion}
We have related the identifiability of multilayer perceptrons with generic polynomial activations to a linear-independence problem for polynomial compositions.
We established this linear independence in several special cases, resolving the identifiability question for some MLP architectures, most notably for shallow MLPs and deep MLPs whose activation degrees grow from layer to layer.

For all MLP architectures with a full identifiability characterization, the following remarkable result holds. For every MLP $f_\mathbf{W}$ that is equivariant under some group action, there are weights $\mathbf{V}$ parametrizing the same function (i.e., $f_\mathbf{V}=f_\mathbf{W}$) such that every single layer $x \mapsto \sigma(V_ix)$ is equivariant \cite{shahverdi2026identifiable}.
Hence, for all such architectures, whenever one wishes to implement equivariant MLPs, it is sufficient to implement layerwise-equivariant ones.

Another application of identifiability is model stitching \cite{bansal2021revisiting}, which asks whether independently trained networks have compatible internal representations such that the lower layers of one network can be plugged into the upper layers of the other after only a simple alignment map. 
For MLP architectures with a full identifiability characterization, model stitching is possible since for two networks that are 
sufficiently well-trained so that they represent the same function, a permutation of the neurons in the ``stitching layer'' is the desired alignment map.

Overall, the identifiability of parametrized machine learning models is one of the most classical and fundamental questions in deep learning theory, lying at the heart of many further questions of interest (e.g., equivariant network design, model stitching). 
A full resolution of identifiability for all polynomial MLP architectures would be a foundational contribution to deep learning. Such a resolution could be achieved by proving the conjectures in this article, or possibly by other means. 
At their core, these conjectures ask whether the Newman--Slater theorem can be generalized from monomials to generic polynomials. 
Thus, beyond their consequences for neural-network identifiability, 
the conjectures are of independent interest in commutative algebra and algebraic geometry.

\section*{Acknowledgements}

We are immensely grateful to Vahid Shahverdi for numerous invaluable discussions on this topic. We also thank Jan Draisma and his wonderful group at the University of Bern for stimulating discussions. KK and GLM were supported by the Wallenberg AI, Autonomous Systems and Software Program (WASP) funded by the Knut and Alice Wallenberg Foundation. GLM was supported by the AI4S initiative funded by the Knut and Alice Wallenberg Foundation. 
AM is a member of GNSAGA and was partially supported by the PRIN 2022 project 20223B5S8L, ``\emph{Birational Geometry of Moduli Spaces and Special Varieties}''.
MM is a member of GNSAGA and was partially supported by PRIN 2022 project 2022ZRRL4C ``\emph{Multilinear Algebraic Geometry}''.

\bibliographystyle{plainnat}
\bibliography{main}

\begin{thebibliography}{21}
\providecommand{\natexlab}[1]{#1}
\providecommand{\url}[1]{\texttt{#1}}
\expandafter\ifx\csname urlstyle\endcsname\relax
  \providecommand{\doi}[1]{doi: #1}\else
  \providecommand{\doi}{doi: \begingroup \urlstyle{rm}\Url}\fi

\bibitem[Bansal et~al.(2021)Bansal, Nakkiran, and Barak]{bansal2021revisiting}
Yamini Bansal, Preetum Nakkiran, and Boaz Barak.
\newblock Revisiting model stitching to compare neural representations.
\newblock In \emph{Advances in Neural Information Processing Systems}, volume~34, 2021.

\bibitem[Cr{\u{a}}ciun(2025)]{cruaciun2025linear}
Alexandru Cr{\u{a}}ciun.
\newblock Linear independence of powers for polynomials.
\newblock \emph{arXiv:2507.10163}, 2025.

\bibitem[Fefferman(1994)]{fefferman1994reconstructing}
Charles Fefferman.
\newblock Reconstructing a neural net from its output.
\newblock \emph{Revista Matem{\'a}tica Iberoamericana}, 10\penalty0 (3):\penalty0 507--556, 1994.

\bibitem[Finkel et~al.(2024)Finkel, Rodriguez, Wu, and Yahl]{finkel2024activation}
Bella Finkel, Jose~Israel Rodriguez, Chenxi Wu, and Thomas Yahl.
\newblock Activation thresholds and expressiveness of polynomial neural networks.
\newblock \emph{arXiv:2408.04569}, 2024.

\bibitem[Gegenfurtner et~al.(2026)Gegenfurtner, Grillo, and Mont{\'u}far]{gegenfurtner2026symmetries}
Johanna~Marie Gegenfurtner, Moritz Grillo, and Guido Mont{\'u}far.
\newblock The symmetries of three-layer relu networks.
\newblock \emph{arXiv:2605.18319}, 2026.

\bibitem[Grigsby et~al.(2023)Grigsby, Lindsey, and Rolnick]{grigsby2023hidden}
Elisenda Grigsby, Kathryn Lindsey, and David Rolnick.
\newblock Hidden symmetries of relu networks.
\newblock In \emph{International Conference on Machine Learning}, pages 11734--11760. PMLR, 2023.

\bibitem[Grillo and Mont{\'u}far(2026)]{grillo2026most}
Moritz Grillo and Guido Mont{\'u}far.
\newblock Most relu networks admit identifiable parameters.
\newblock \emph{arXiv:2605.03601}, 2026.

\bibitem[Henry et~al.(2025)Henry, Marchetti, and Kohn]{henry2024geometrylightningselfattentionidentifiability}
Nathan~W. Henry, Giovanni~Luca Marchetti, and Kathlén Kohn.
\newblock Geometry of lightning self-attention: Identifiability and dimension.
\newblock In \emph{International Conference on Learning Representations}, 2025.

\bibitem[Marchetti et~al.(2025)Marchetti, Shahverdi, Mereta, Trager, and Kohn]{marchetti2025invitationneuroalgebraicgeometry}
Giovanni~Luca Marchetti, Vahid Shahverdi, Stefano Mereta, Matthew Trager, and Kathlén Kohn.
\newblock Algebra unveils deep learning -- an invitation to neuroalgebraic geometry.
\newblock In \emph{International Conference on Machine Learning}, 2025.
\newblock URL \url{https://arxiv.org/abs/2501.18915}.

\bibitem[Massarenti and Mella(2025)]{massarenti2025alexander}
Alex Massarenti and Massimiliano Mella.
\newblock The alexander-hirschowitz theorem for neurovarieties.
\newblock \emph{arXiv:2511.19703}, 2025.

\bibitem[Newman and Slater(1979)]{newman1979waring}
DJ~Newman and Morton Slater.
\newblock Waring's problem for the ring of polynomials.
\newblock \emph{Journal of Number Theory}, 11\penalty0 (4):\penalty0 477--487, 1979.

\bibitem[Petzka et~al.(2020)Petzka, Trimmel, and Sminchisescu]{petzka2020notes}
Henning Petzka, Martin Trimmel, and Cristian Sminchisescu.
\newblock Notes on the symmetries of 2-layer relu-networks.
\newblock In \emph{Proceedings of the northern lights deep learning workshop}, volume~1, pages 6--6, 2020.

\bibitem[Phuong and Lampert(2020)]{phuong2020functional}
Mary Phuong and Christoph~H Lampert.
\newblock Functional vs. parametric equivalence of relu networks.
\newblock In \emph{International conference on learning representations}, 2020.

\bibitem[Shahverdi et~al.(2025)Shahverdi, Marchetti, and Kohn]{shahverdi2024geometryoptimizationpolynomialconvolutional}
Vahid Shahverdi, Giovanni~Luca Marchetti, and Kathlén Kohn.
\newblock On the geometry and optimization of polynomial convolutional networks.
\newblock In \emph{Artificial Intelligence and Statistics}, 2025.

\bibitem[Shahverdi et~al.(2026{\natexlab{a}})Shahverdi, Marchetti, B{\"o}kman, and Kohn]{shahverdi2026identifiable}
Vahid Shahverdi, Giovanni~Luca Marchetti, Georg B{\"o}kman, and Kathl{\'e}n Kohn.
\newblock Identifiable equivariant networks are layerwise equivariant.
\newblock \emph{arXiv:2601.21645}, 2026{\natexlab{a}}.

\bibitem[Shahverdi et~al.(2026{\natexlab{b}})Shahverdi, Marchetti, and Kohn]{shahverdi2026learning}
Vahid Shahverdi, Giovanni~Luca Marchetti, and Kathl{\'e}n Kohn.
\newblock Learning on a razor’s edge: Identifiability and singularity of polynomial neural networks.
\newblock In \emph{The Fourteenth International Conference on Learning Representations}, 2026{\natexlab{b}}.

\bibitem[Sussmann(1992)]{sussmann1992uniqueness}
H{\'e}ctor~J Sussmann.
\newblock Uniqueness of the weights for minimal feedforward nets with a given input-output map.
\newblock \emph{Neural networks}, 5\penalty0 (4):\penalty0 589--593, 1992.

\bibitem[Trager et~al.(2020)Trager, Kohn, and Bruna]{trager2019pure}
Matthew Trager, Kathl{\'e}n Kohn, and Joan Bruna.
\newblock Pure and spurious critical points: a geometric study of linear networks.
\newblock In \emph{International Conference on Learning Representations}, 2020.

\bibitem[Usevich et~al.(2025)Usevich, D{\'e}rand, Borsoi, and Clausel]{usevich2025identifiability}
Konstantin Usevich, Clara D{\'e}rand, Ricardo Borsoi, and Marianne Clausel.
\newblock Identifiability of deep polynomial neural networks.
\newblock \emph{arXiv:2506.17093}, 2025.

\bibitem[Vla{\v{c}}i{\'c} and B{\"o}lcskei(2021)]{vlavcic2021affine}
Verner Vla{\v{c}}i{\'c} and Helmut B{\"o}lcskei.
\newblock Affine symmetries and neural network identifiability.
\newblock \emph{Advances in Mathematics}, 376:\penalty0 107485, 2021.

\bibitem[Vla{\v{c}}i{\'c} and B{\"o}lcskei(2022)]{vlavcic2022neural}
Verner Vla{\v{c}}i{\'c} and Helmut B{\"o}lcskei.
\newblock Neural network identifiability for a family of sigmoidal nonlinearities.
\newblock \emph{Constructive Approximation}, 55\penalty0 (1):\penalty0 173--224, 2022.

\end{thebibliography}

\end{document}